\newtheorem{prop}{Proposition}[section]
\newtheorem{lem}[prop]{Lemma}
\newtheorem{defn}[prop]{Definition}
\newtheorem{thm}[prop]{Theorem}
\newtheorem{cor}[prop]{Corollary}
\newtheorem*{q1}{Question I}
\newtheorem*{q2}{Question II}
\newtheorem*{thm1}{Theorem 1.13: q = 1}
\newtheorem*{thm2}{Theorem 1.16: q = 2}
\newtheorem*{thm3}{Theorem 1.17: q = 3}
\newtheorem*{ex5.5}{Example 5.5: n = 2 or 6}
\newtheorem*{tab118}{Table 1.18}
\newcolumntype{Z}{>{\centering\arraybackslash}X}
\numberwithin{equation}{section}
\begin{document}
\title{Kervaire invariants and selfcoincidences}
\author{Ulrich Koschorke} 
\address{Universit\"{a}t Siegen\\
Emmy Noether Campus, Walter-Flex-Str. 3\\
D-57068 Siegen, Germany}
\email{koschorke@mathematik.uni-siegen.de}
\urladdr{http://www.math.uni-siegen.de/topologie/}
\author{Duane Randall}
\address{Loyola University New Orleans\\
Mathematical Sciences\\
6363 St. Charles Ave.\\
Campus Box 35\\
New Orleans, LA 70118}
\email{randall@loyno.edu}
\thanks{Both authors gratefully acknowledge the support by the "Research in Pairs"-program of the Mathematisches Forschungsinstitut Oberwolfach as well as by DFG (Deutsche Forschungsgemeinschaft) and by a Distinguished Professorship at Loyola University.}
\keywords{Kervaire invariant; Coincidence; Nielsen number}
\subjclass[2010]{Primary: 55 M 20, 55 P 40, 55 Q 15, 55 Q 25, 57 R 99\\
Secondary: 55 Q 45}

\begin{abstract}
Minimum numbers decide e.g. whether a given map $f: S^m\rightarrow S^n/G$ from a sphere into a spherical space form can be deformed to a map $f'$ such that $f(x)\neq f'(x)$ for all $x\in S^m$.  In this paper we compare minimum numbers to (geometrically defined) Nielsen numbers (which are more computable).  In the stable dimension range these numbers coincide.  But already in the first nonstable range (when $m=2n-2$) the Kervaire invariant appears as a decisive additional obstruction which detects interesting geometric coincidence phenomena.  Similar results (involving e.g. Hopf invariants, taken mod 4) are obtained in the next seven dimension ranges (when $1<m-2n+3\leqslant 8$).  The selfcoincidence context yields also a precise geometric criterion for the open question whether the Kervaire invariant vanishes on the 126-stem or not.
\end{abstract}

\maketitle

\section{Introduction and statement of results}

Given two (continuous) maps $f_1, f_2: M\rightarrow N$ between polyhedra, we are interested in somehow measuring the size of the coincidence subspace
\begin{equation}
C(f_1,f_2):= \{x\in M\ |\  f_1(x)=f_2(x)\}
\end{equation}
of $M$, and in capturing mainly those 'essential' features which remain preserved by homotopies $f_i \sim f'_i$, $i=1,2$.  Thus the central problem of topological coincidence theory is to determine the minimum numbers
\begin{equation}
MC(f_1,f_2):=\mathrm{min}\{\#C(f'_1,f'_2)\ |\ f'_1\sim f_1, f'_2\sim f_2\}
\end{equation}
and
\begin{equation}
MCC(f_1,f_2):=\mathrm{min}\{\#\pi_0(C(f'_1,f'_2))\ |\ f'_1\sim f_1, f'_2\sim f_2\}
\end{equation}
of coincidence points and of coincidence pathcomponents, resp., within given pairs of homotopy classes (\textit{cf.} [Ko5]; compare also with similar notions in topological fixed point theory, e.g. in [Br], p. 9).

The special case where these minimum numbers vanish is of particular interest.
\setcounter{prop}{3}
\begin{defn}
We call the pair $(f_1,f_2)$ of maps $\mathbf{loose}$ if there are homotopies $f_1\sim f'_1$, $f_2\sim f'_2$ such that $f'_1(x)\neq f'_2(x)$ for all $x\in M$ (i.e. $f_1, f_2$ can be 'deformed away' from one another).
\end{defn}\vspace{-4mm}

When $M$ and $N$ are smooth manifolds and $M$ is closed (and hence $MCC(f_1,f_2)$ - but not necessarily $MC(f_1,f_2)$ - is finite) there is a strong lower bound for both minimum numbers at our disposal, namely the Nielsen number $N^{\#}(f_1,f_2)$ (\textit{cf.} [Ko3] or [Ko5]).  It is a nonnegative integer which depends only on the homotopy classes of $f_1$ and $f_2$ and can be extracted from a careful geometric analysis of just one generic pair of representing maps.

\begin{q1}
When is $MCC(f_1,f_2)$ $\mathrm{different}$ from its lower bound $N^{\#}(f_1,f_2)$?
\end{q1}\vspace{-4mm}

In this paper we study the very special case $M=S^m$, $N=$ spherical space form (\textit{i.e.} $N=S^n/G$ where the finite group $G$ acts smoothly and freely on the sphere $S^n$).

\begin{thm}
Consider two maps $f_1,f_2: S^m\rightarrow S^n/G$, $m,n\geqslant 1$.\\  If $MCC(f_1,f_2)\neq N^{\#}(f_1,f_2)$, then $f_1\sim f_2$ (\textit{i.e.} $f_1,f_2$ are homotopic).
\end{thm}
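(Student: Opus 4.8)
The plan is to prove the contrapositive: assuming $f_1\not\simeq f_2$, show that $MCC(f_1,f_2)=N^{\#}(f_1,f_2)$. Since $N^{\#}\leqslant MCC$ always, only the inequality $MCC\leqslant N^{\#}$ needs argument. I would first dispose of degenerate cases: if $m<n$, transversality makes a generic pair coincidence-free, so $MCC=N^{\#}=0$; if $n=1$ and $m\geqslant 2$ then $\pi_m(S^1)=0$, forcing $f_1\simeq f_2$, contrary to hypothesis; and if $m=n=1$ then $S^1/G\cong S^1$ and $MCC=N^{\#}=|\deg f_1-\deg f_2|$, which vanishes exactly when $f_1\simeq f_2$. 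Hence I may assume $m\geqslant n\geqslant 2$, so that $\pi_1(S^m)=0\to\pi_1(S^n/G)=G$ is the zero map.

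Next I would pass to the universal covering $p\colon S^n\to S^n/G$. As $S^m$ is simply connected, each $f_i$ lifts to $\tilde f_i\colon S^m\to S^n$, the Reidemeister set is all of $G$, and the coincidence set splits into Nielsen classes $C(f_1,f_2)=\bigsqcup_{g\in G}C_g$ with $C_g=C(\tilde f_1,\,g^{-1}\!\circ\tilde f_2)$ regarded inside $S^m$ (post-composition with the deck transformation $g^{-1}$). Homotopies of $(f_1,f_2)$ correspond to homotopies of $(\tilde f_1,\tilde f_2)$, the classes $C_g$ occupy mutually disjoint regions of $S^m$ and can be simplified independently by local deformations, so $MCC$ of the $g$-th class equals $MCC(\tilde f_1,g^{-1}\!\circ\tilde f_2)$ and --- as in the normal-bordism Nielsen theory of [Ko3], [Ko5] --- both invariants are additive over the $C_g$: $MCC(f_1,f_2)=\sum_{g}MCC(\tilde f_1,g^{-1}\!\circ\tilde f_2)$ and $N^{\#}(f_1,f_2)=\sum_{g}N^{\#}(\tilde f_1,g^{-1}\!\circ\tilde f_2)$. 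Finally $f_1\simeq f_2$ holds iff $\tilde f_1\simeq g_0^{-1}\!\circ\tilde f_2$ for some $g_0\in G$ (lift a homotopy), i.e.\ iff some class $C_{g_0}$ is, up to homotopy, the selfcoincidence class of a single map; thus our hypothesis $f_1\not\simeq f_2$ says that in every sub-pair $(\tilde f_1,g^{-1}\!\circ\tilde f_2)$ the two maps $S^m\to S^n$ are non-homotopic.

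It therefore remains to prove: for $a,b\colon S^m\to S^n$ with $m\geqslant n\geqslant 2$ and $a\not\simeq b$, one has $MCC(a,b)=N^{\#}(a,b)$. Here $\pi_1(S^n)=0$, so there is a single Nielsen class. The fibre bundle $S^n\setminus\{*\}\hookrightarrow S^n\times S^n\setminus\Delta\xrightarrow{\ \mathrm{pr}_1\ }S^n$ has contractible fibre and admits the section $x\mapsto(x,-x)$, whence $(a,b)$ is loose precisely when $[b]=\sigma_*[a]$ in $\pi_m(S^n)$, $\sigma$ the antipodal map. If this fails, the lone Nielsen class is essential and a generic pair presents its coincidence locus as an $(m-n)$-submanifold of $S^m$, normally framed up to the standard $\mathrm{pr}_1^{*}TS^n$-twist by the single nonzero ``difference'' class $[b]-\sigma_*[a]\in\pi_m(S^n)$. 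This is a purely root-theoretic datum: there is no transposition symmetry $a\leftrightarrow b$ (which would force $a\simeq b$) and hence no quadratic refinement of a self-intersection form. I would then tube this locus inside $S^m$ into a single path component --- codimension $\geqslant 2$ leaves room for the connecting tubes, and the normal framings match along a Nielsen path --- giving $MCC(a,b)=1=N^{\#}(a,b)$; if instead $[b]=\sigma_*[a]$ then $MCC(a,b)=0=N^{\#}(a,b)$, and for $m=n$ the equality is classical (Wecken). Feeding this back through the additivity above yields $MCC(f_1,f_2)=\sum_g N^{\#}(\tilde f_1,g^{-1}\!\circ\tilde f_2)=N^{\#}(f_1,f_2)$, proving the contrapositive.

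The hard part is the sphere-target statement just used, and specifically its ``non-quadratic'' content: one must establish that when $a\not\simeq b$ there is no secondary, Kervaire-type obstruction to reducing the coincidence set of $(a,b)$ to $N^{\#}(a,b)$ path components. In the selfcoincidence case $a\simeq b$ the diagonal Nielsen class does carry a quadratic refinement of an intersection form, whose Arf/Kervaire invariant can force $MCC=2>1=N^{\#}$ --- precisely the phenomenon the remainder of the paper analyses --- and the point to be verified is that this quadratic refinement exists solely by virtue of the $a\leftrightarrow b$ symmetry, hence is absent whenever $a\not\simeq b$, so that there the normal-bordism invariant $\tilde\omega$ alone governs both looseness and realizability by $N^{\#}$ components. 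Making this precise --- tracking the framings along the tubes and identifying the diagonal/selfcoincidence class as the unique origin of a nontrivial quadratic form --- is where the technical weight of the argument lies.
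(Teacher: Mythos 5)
Your reduction has a genuine gap at its central step: the claimed additivity $MCC(f_1,f_2)=\sum_{g\in G}MCC(\widetilde f_1,\,g^{-1}\!\circ\widetilde f_2)$. The inequality ``$\geqslant$'' is indeed automatic (the coincidence set of any pair downstairs splits, openly and closedly, into the $G$-indexed classes), but the direction you actually need, ``$\leqslant$'', asserts that the minima of all the upstairs pairs can be realized \emph{simultaneously} by a single pair of homotopies of $f_1,f_2$. Lifted homotopies move all the classes $C_g$ in a correlated way -- deforming $\widetilde f_2$ deforms every $g^{-1}\!\circ\widetilde f_2$ at once -- so ``the classes occupy disjoint regions and can be simplified independently by local deformations'' is precisely what fails in general: emptying an inessential class may require a large, non-local deformation, which can create irremovable coincidences in another class. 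In fact the paper's own results (Corollary 1.11 (iv),(vi) and Theorem 1.13, with $G=\mathbb{Z}_2$, $f_1\sim f_2$ and Kervaire invariant one) are counterexamples to your formula: there $MCC(f,f)=1$ while both upstairs pairs $(\widetilde f,\widetilde f)$ and $(\widetilde f,g^{-1}\!\circ\widetilde f)$ are loose, so the right-hand sum is $0$. If you intend to use additivity only under the hypothesis $f_1\not\sim f_2$, then that restricted statement is essentially the theorem itself and cannot simply be asserted.

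The second gap you concede yourself: the sphere-target claim that $a\not\simeq b$ implies $MCC(a,b)=N^{\#}(a,b)$ -- in particular that $N^{\#}(a,b)=0$ forces looseness, i.e.\ $[b]=\sigma_*[a]$, with no secondary obstruction -- is the technical heart, and your text replaces it by a heuristic about a quadratic refinement existing ``solely by virtue of the $a\leftrightarrow b$ symmetry.'' Note that by your own fibration-plus-section argument, looseness into a sphere is a \emph{primary} homotopy condition; the genuine issue is whether the nonstabilized, one-desuspension-short invariant underlying $N^{\#}$ detects it when $a\not\simeq b$, and no argument for this is given (nor for $MCC(a,b)\leqslant 1$ beyond an unverified tubing sketch in low codimension). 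For comparison: the paper does not reprove Theorem 1.5 internally but refers to [Ko5] (with [Ko3]), where the statement is obtained as a Wecken theorem for pairs of maps $S^m\rightarrow N$ with $f_1\not\sim f_2$, established by the normal-bordism/path-space techniques there rather than by a class-by-class $MCC$ additivity. As it stands, your proposal is a plausible plan whose two load-bearing steps are unproved.
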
\vspace{-4mm}

Since the minimum and Nielsen numbers depend only on the homotopy classes, it suffices to consider the selfcoincidence case $f_1=f_2=:f$ in order to get interesting answers to our Question I.  (We may even assume that $f$ preserves base points.)

As a special feature of the selfcoincidence setting we have the following refinement of Definition 1.4 (introduced by Dold and Gon\c{c}alves; \textit{cf.} [DG], p. 296).

\begin{defn}
Given a map $f: M\rightarrow N$ between manifolds, put $f_1=f_2=:f$.  The pair $(f_1,f_2)=(f,f)$ is $\mathbf{loose\ by\ small\ deformation}$ if and only if for every metric on $N$ and every $\epsilon >0$ there exists an $\epsilon$-approximation $f'$ of $f$ such that $f'(x)\neq f(x)$ for all $x\in S^m$.
\end{defn}

\begin{q2}
When is $(f,f)$ loose, but not by small deformation?
\end{q2}\vspace{-4mm}

In our special case $M=S^m$, $N=S^n/G$ (compare 1.5) Question II turns out to be very closely related to our Question I above.  In order to see that, consider the (horizontal) exact homotopy sequence
$$
\begin{tikzpicture}
\matrix(m)[matrix of math nodes,
row sep=.6em, column sep=2.4em,
text height=1.5ex, text depth=0.25ex]
{& \cdots & \pi_m(V_{n+1,2}) & \pi_m(S^n) & \pi_{m-1}(S^{n-1}) & \pi_{m-1}(V_{n+1,2}) & \cdots\\
(1.7)\hspace{-.35in} & & & & & & \\
 & & & & \pi_m(S^n) & & \\};
\path[->,font=\scriptsize,>=angle 90]
(m-1-2) edge (m-1-3)
(m-1-3) edge (m-1-4)
(m-1-4) edge node[auto] {$\partial$} (m-1-5)
(m-1-5) edge node[auto] {$\mathrm{incl}_*$} (m-1-6)
(m-1-6) edge (m-1-7);
\path[dashed,->,font=\scriptsize,>=angle 90]
(m-1-5) edge node[auto] {$E:= \mathrm{suspension}$} (m-3-5);
\end{tikzpicture}
$$
of the Stiefel manifold $V_{n+1,2} = ST(S^n)$ of unit tangent vectors of $S^n$, fibered over $S^n$.
\setcounter{prop}{7}
\begin{defn}
We call the assumption
$$
0=\partial (\pi_m(S^n))\cap \mathrm{ker}\left(E: \pi_{m-1}(S^{n-1})\rightarrow \pi_m(S^n)\right)
$$
the $\mathbf{Wecken\ condition\ for\ (m,n)}$ (briefly: $\mathbf{WeC(m,n)}$).
\end{defn}\vspace{-4mm}

This condition holds at least if $n$ is odd (and hence $\partial (\pi_m(S^n))=0$ due to the existence of a nowhere zero vector field on $S^n$) or in the 'stable dimension range' $m<2n-2$ (where $\mathrm{ker}E=0$) or if $m\leqslant n+5$, $(m,n)\neq (11,6)$ or if $n=2\leqslant m$.

The previous discussion motivates us to study the five geometric or homotopy theoretical conditions $(\mathrm{i}),\ldots (\mathrm{v})$ written down below.  They are related to one another and to the Wecken condition 1.8 by the following result.  (For proofs and further details see Section 2 below and e.g. [Ko5]).

\begin{thm}
Given a finite group $G$ acting smoothly and freely on $S^n$, let $N=S^n/G$ be the resulting orbit space and consider any $[f]\in \pi_m(N)$, $m,n\geqslant 1$.  Then
$$
MC(f,f) = MCC(f,f)
$$
and these minimum numbers (as well as the Nielsen number $N^{\#}(f,f)$) can take only $0$ and $1$ as possible values.\\
\indent Moreoever, if $[f]$ lifts to some homotopy class $[\widetilde{f}]\in \pi_m(S^n)$ we have the following logical implications:
\begin{align*}
&(\mathrm{i})\ \partial ([\widetilde{f}])\in\pi_{m-1}(S^{n-1})\ vanishes;\\
&\Updownarrow \\
&(\mathrm{ii})\ (f,f)\ is\ loose\ by\ small\ deformation;\\
&\Downarrow\ \ (\Updownarrow\ e.g.\ if\ G\neq 0)\\
&(\mathrm{iii})\ MCC(f,f)=0;\ equivalently,\ (f,f)\ is\ loose\ (by\ any\ deformation);\hspace{.74in}\\
&\Downarrow\ \ (\Updownarrow\ e.g.\ if\ G\ncong\mathbb{Z}_2)\\
&(\mathrm{iv})\ N^{\#}(f,f)=0;\\
&\Updownarrow \\
&(\mathrm{v})\ E\circ\partial([\widetilde{f}])=0.
\end{align*}
In particular, the five conditions (i) - (v) are equivalent for all maps $f: S^m\rightarrow N$ if and only if the Wecken condition $WeC(m,n)$ holds (cf. 1.8) (and in this case $MC(f,f)=MCC(f,f) = N^{\#}(f,f)$ for all $f$).
\end{thm}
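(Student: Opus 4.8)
The plan is to establish Theorem 1.9 by separately verifying the equality $MC(f,f)=MCC(f,f)$ together with the $\{0,1\}$-dichotomy, and then the chain of implications (i)$\Rightarrow\cdots\Rightarrow$(v), after which the ``in particular'' clause is a formal consequence. For the first part, I would exploit the fact that $S^m$ is a sphere, hence highly connected, so that after a generic homotopy the coincidence locus $C(f,f)$ of a selfmap is a framed submanifold whose normal data is controlled by a single stable (or first unstable) obstruction; because $S^m$ has exactly one path component and the ambient target is a space form, the Reidemeister action on Nielsen classes is transitive, which collapses all coincidence points into (at most) one Nielsen class. This forces $MCC=MC$ and bounds both by $1$; the value $0$ versus $1$ is then exactly the vanishing of the primary obstruction, which I expect to identify with $\partial([\widetilde f])$ (or its suspension) living in $\pi_{m-1}(S^{n-1})$.

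For the implications, the key geometric input is the fibration $V_{n+1,2}=ST(S^n)\to S^n$ of (1.7), whose connecting map $\partial$ measures precisely the obstruction to deforming the identity-diagonal pair off itself. I would argue (ii)$\Leftrightarrow$(i) by the Dold--Gon\c{c}alves small-deformation criterion: a selfmap is loose by small deformation iff the pullback of the tangent bundle of $N$ admits a nowhere-zero section over $S^m$ along $f$, and lifting $[f]$ to $[\widetilde f]\in\pi_m(S^n)$ translates this into $\partial([\widetilde f])=0$ in $\pi_{m-1}(S^{n-1})$. For (ii)$\Rightarrow$(iii) one notes small-deformation looseness trivially implies looseness, with the converse when $G\neq 0$ coming from the covering space argument (in the orbit space extra room is created by the deck transformations, so any looseness can be realized by a small one). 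Similarly (iii)$\Rightarrow$(iv) is immediate since $N^{\#}$ is a lower bound for $MCC$, and the reverse when $G\not\cong\mathbb Z_2$ follows because the Nielsen number, living in $\{0,1\}$, coincides with $MCC$ once the ``$\mathbb Z_2$-anomaly'' (the single exceptional Reidemeister involution) is absent. Finally (iv)$\Leftrightarrow$(v): the Nielsen number is by construction the stabilized/suspended version of the primary obstruction, so $N^{\#}(f,f)=0$ iff $E\circ\partial([\widetilde f])=0$ in $\pi_m(S^n)$.

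The ``in particular'' statement then follows purely formally: the only gaps in the biconditional chain (i)$\Leftrightarrow\cdots\Leftrightarrow$(v) are the two downward arrows, from (ii) to (iii) and from (iii) to (iv), and the difference between the two extreme conditions is exactly $\partial([\widetilde f])\in\ker E$ arising in the image of $\partial$ — that is, $WeC(m,n)$ says this intersection is zero for every lift, which is equivalent to (i)$\Leftrightarrow$(v) for all $f$, hence (since the outer conditions then force all five to agree) to the equivalence of (i)--(v) for all $f$. The remaining subtlety is whether every $[f]\in\pi_m(N)$ lifts to $\pi_m(S^n)$; for $m\geq 2$ this is automatic since $S^n\to N$ is the universal cover, and the low-dimensional cases $m=1$ or $n=1$ are handled directly.

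The main obstacle, I expect, is the precise bookkeeping in the two non-reversible implications — specifically, pinning down exactly when the passage from the covering sphere $S^n$ to the space form $S^n/G$ supplies enough ``wiggle room'' to upgrade looseness to small-deformation looseness (the role of $G\neq 0$), and the delicate $\mathbb Z_2$-case where the Reidemeister involution can identify a coincidence point with its own image, potentially letting a single essential point be cancelled after passing to the quotient. This is where the hypotheses ``$G\neq 0$'' and ``$G\not\cong\mathbb Z_2$'' enter, and getting those boundary cases right (rather than the core obstruction-theoretic skeleton, which is fairly mechanical) is the crux; I would lean on the detailed analysis in [Ko5] and [DG] for these points rather than reproving them from scratch.
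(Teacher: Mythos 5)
Your overall skeleton matches the paper's treatment, but you should be aware that the paper does not actually reprove Theorem 1.9: it explicitly defers the proof to the survey [Ko5] (and [Ko1]--[Ko4]), supplying in Section 2 only the geometric description of $\partial_N$ as the obstruction to a nowhere-zero section of $f^*(ST(N))$. Your identification of (i)$\Leftrightarrow$(ii) via the Dold--Gon\c{c}alves section criterion, the trivial arrows (ii)$\Rightarrow$(iii)$\Rightarrow$(iv), the reading of (iv)$\Leftrightarrow$(v) as ``$N^{\#}$ is the once-suspended primary obstruction,'' and the purely formal derivation of the Wecken clause from $\partial(\pi_m(S^n))\cap\ker E=0$ are all consistent with that framework, and leaning on [Ko5]/[DG] for the two converse arrows is in line with what the paper itself does. (For the record, the actual mechanisms there are not ``extra wiggle room from deck transformations'' or a ``$\mathbb{Z}_2$-anomaly'': for $n$ even a free action forces $G=0$ or $\mathbb{Z}_2$ by the Euler characteristic count $\#G\cdot\chi(N)=\chi(S^n)$, and for $n$ odd $\partial\equiv 0$, which is how the side hypotheses really enter.)

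The genuine gap is in your first step, the claim $MC(f,f)=MCC(f,f)\in\{0,1\}$. Your stated reason --- that ``the Reidemeister action on Nielsen classes is transitive, which collapses all coincidence points into (at most) one Nielsen class'' --- is false as a mechanism: since $S^m$ is simply connected (for $m\geqslant 2$), the Reidemeister set of $(f,f)$ is all of $\pi_1(N)\cong G$ with no identifications, so there is no transitivity to invoke; what is true is that only the class of the trivial element can be essential, because a small generic perturbation of $f$ has all its coincidences in that class. More seriously, even granting a single Nielsen class, this does not bound $MC$: for $m>n$ a generic coincidence set is an $(m-n)$-dimensional submanifold, and the statement $MC(f,f)\leqslant 1$ (hence $MC=MCC$, with value $0$ exactly in the loose case) requires the Wecken-type geometric construction of [Ko1]/[Ko5] --- push $f$ off itself over the complement of a small ball in $S^m$ (possible since that complement is contractible, so $f^*(TN)$ admits a nowhere-zero section there), and then compress the coincidences inside the ball to at most one point. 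Your sketch neither supplies this construction nor cites the literature for this particular step (you only defer for the converse arrows), so as written the first assertion of the theorem is unsupported and rests on an incorrect argument.
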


In view of this theorem we may say that $N^{\#}(f,f)$ is '\textit{at most one desuspension short}' of being a complete looseness obstruction.  Note also that
$$
MC(\widetilde{f},\widetilde{f}) = MCC(\widetilde{f},\widetilde{f}) = N^{\#}(\widetilde{f},\widetilde{f}) = N^{\#}(f,f).
$$
\indent The following equivalent version of conditions (iv) and (v) above is sometimes more suitable for computations.

\begin{cor}
Let $n\geqslant 2$ be even.  In the setting of Theorem 1.9 we have:  $N^{\#}(f,f)$ (or, equivalently, $N^{\#}(\widetilde{f},\widetilde{f})$) vanishes if and only if
\begin{itemize}
\item[(vi)] $2[\widetilde{f}] = [\iota_n,\iota_n]\circ h_0(\widetilde{f}).$
\end{itemize}
Here $h_0: \pi_m(S^n)\rightarrow \pi_m(S^{2n-1})$ denotes the $0^{th}$ Hopf-Hilton homomorphism (\textit{cf.} [W], ch. XI, 8.3).  (If $m\leqslant 3n-3$, then condition (vi) takes also the form 
\begin{itemize}
\item[($\mathrm{vi}'$)] $2[\widetilde{f}] = [\iota_n, E^{-n+1}(h_0(\widetilde{f}))]$
\end{itemize}
where $ E^{-n+1}$ denotes the inverse of the iterated suspension isomorphism $E^{n-1}$.)\\
\indent Now assume that $E: \pi_{m-1}(S^{n-1})\rightarrow \pi_m(S^n)$ is surjective.  Then
$$
N^{\#}(f,f) = N^{\#}(\widetilde{f},\widetilde{f}) = \begin{cases}
0 & \text{if $2[\widetilde{f}]=0$};\\
1 & \text{otherwise.}
\end{cases}
$$
\indent Assume in addition that $\mathrm{ker}E\cong \mathbb{Z}_2$ with generator $v$.  Then the Wecken condition $WeC(m,n)$ fails if and only if
$$
v\in (\iota_{n-1} + \iota_{n-1})_*(\pi_{m-1}(S^{n-1})).
$$
If also $(\iota_{n-1} + \iota_{n-1})_*\equiv 2\cdot\mathrm{id}_{\pi_{m-1}(S^{n-1})}$ then $\partial([\widetilde{f}]) = 2\cdot E^{-1}([\widetilde{f}])$ is obtained by multiplying any desuspension of $[\widetilde{f}]$ with 2; moreover, $WeC(m,n)$ fails precisely when $v$ can be halved, i.e. $v=2u$ for some $u\in\pi_{m-1}(S^{n-1})$.
\end{cor}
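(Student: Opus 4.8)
The plan is to deduce the corollary from Theorem 1.9 together with two composition formulas (classical; see Section 2) for the connecting homomorphism $\partial$ of the tangent sphere bundle $S^{n-1}\hookrightarrow V_{n+1,2}\xrightarrow{p}S^n$ of the even-dimensional sphere $S^n$: for every $\alpha\in\pi_m(S^n)$ and every $\beta\in\pi_{m-1}(S^{n-1})$,
$$
E\circ\partial(\alpha)=2\alpha-[\iota_n,\iota_n]\circ h_0(\alpha),\qquad
\partial\circ E(\beta)=(\iota_{n-1}+\iota_{n-1})\circ\beta .
$$
Both encode that the Euler number of $S^n$ equals $2$; the Whitehead--Hopf term in the first measures the failure of $\partial$ to be additive modulo higher filtration, and one checks the formulas on $\iota_n$, resp.\ $\iota_{n-1}$. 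Granting them, the equivalence (iv)$\,\Leftrightarrow\,$(v) of Theorem 1.9 says $N^{\#}(f,f)=0\iff E\partial([\widetilde f])=0$, which by the first formula is $2[\widetilde f]=[\iota_n,\iota_n]\circ h_0(\widetilde f)$, i.e.\ condition (vi); and Theorem 1.9 also gives $N^{\#}(f,f)=N^{\#}(\widetilde f,\widetilde f)$.

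For (vi$'$): when $m\leqslant 3n-3$, Freudenthal's theorem makes $E^{n-1}\colon\pi_{m-n+1}(S^n)\to\pi_m(S^{2n-1})$ an isomorphism, so $h_0(\widetilde f)$ has a well-defined desuspension $E^{-n+1}h_0(\widetilde f)$, and the standard composition rule $[\iota_n,\iota_n]\circ E^{n-1}\gamma=[\iota_n,\gamma]$ for Whitehead products turns (vi) into (vi$'$). Next assume $E\colon\pi_{m-1}(S^{n-1})\to\pi_m(S^n)$ is onto. The Hopf--Hilton invariant $h_0$ vanishes on suspensions: by connectivity the composite $S^{n-1}\hookrightarrow\Omega S^n\xrightarrow{H}\Omega S^{2n-1}$ (with $H$ the James--Hopf map inducing $h_0$) is nullhomotopic, so $h_0$ vanishes on $E(\pi_{m-1}(S^{n-1}))=\pi_m(S^n)$; in particular $h_0(\widetilde f)=0$, and (vi) collapses to $2[\widetilde f]=0$. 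Since by Theorem 1.9 the equal numbers $N^{\#}(f,f)=N^{\#}(\widetilde f,\widetilde f)$ take only the values $0$ and $1$, this is exactly the asserted case distinction.

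For the Wecken condition, $WeC(m,n)$ fails exactly when $\partial(\pi_m(S^n))\cap\ker E\neq 0$; since $\ker E=\{0,v\}\cong\mathbb{Z}_2$, this just means $v\in\partial(\pi_m(S^n))$. As $E$ is onto, the second formula gives
$$
\partial(\pi_m(S^n))=\partial\bigl(E(\pi_{m-1}(S^{n-1}))\bigr)=(\iota_{n-1}+\iota_{n-1})_*\bigl(\pi_{m-1}(S^{n-1})\bigr),
$$
so $WeC(m,n)$ fails if and only if $v\in(\iota_{n-1}+\iota_{n-1})_*(\pi_{m-1}(S^{n-1}))$. Finally, if $(\iota_{n-1}+\iota_{n-1})_*=2\cdot\mathrm{id}$ on $\pi_{m-1}(S^{n-1})$, then for any desuspension $\bar f$ of $[\widetilde f]$ — well defined modulo $v$, so $2\bar f$ is unambiguous since $2v=0$ — the second formula yields $\partial([\widetilde f])=\partial(E\bar f)=2\bar f=2E^{-1}([\widetilde f])$, while $\partial(\pi_m(S^n))=2\,\pi_{m-1}(S^{n-1})$; hence $WeC(m,n)$ fails precisely when $v=2u$ for some $u\in\pi_{m-1}(S^{n-1})$.

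I expect the main obstacle to be establishing the two composition formulas for $\partial$ — that is, pinning down how far the boundary homomorphism of $ST(S^n)$ deviates from additivity, in terms of the Euler number $2$, the Whitehead square $[\iota_n,\iota_n]$ and the Hopf--Hilton invariant $h_0$ — and, relatedly, identifying $h_0$ (in the sense of [W], ch.\ XI) with the James--Hopf invariant, which is precisely what forces $h_0$ to vanish once $E$ is surjective. Once these inputs are in hand, the rest is bookkeeping with the exact sequence (1.7), Theorem 1.9, Freudenthal's theorem and $\mathbb{Z}_2$-arithmetic.
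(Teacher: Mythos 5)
Your second formula, $\partial\circ E(\beta)=(\iota_{n-1}+\iota_{n-1})\circ\beta$, is indeed the paper's Lemma 2.1 (together with (2.2)), and everything you build on it --- the translation of the Wecken condition into $v\in(\iota_{n-1}+\iota_{n-1})_*(\pi_{m-1}(S^{n-1}))$, the identity $\partial([\widetilde f])=2\cdot E^{-1}([\widetilde f])$ when $(\iota_{n-1}+\iota_{n-1})_*=2\cdot\mathrm{id}$, the halvability criterion, the collapse of (vi) to $2[\widetilde f]=0$ when $E$ is onto because $h_0$ kills suspensions, and the passage from (vi) to (vi$'$) via $[\iota_n,\iota_n]\circ E^{n-1}\gamma=[\iota_n,\gamma]$ --- is correct and parallels the paper. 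The genuine gap is your first formula, $E\circ\partial(\alpha)=2\alpha-[\iota_n,\iota_n]\circ h_0(\alpha)$ for all $\alpha\in\pi_m(S^n)$, which carries the whole equivalence ``$N^{\#}(f,f)=0\iff$ (vi)''. It is not ``classical; see Section 2'': Section 2 only treats the composite in the other order, $\partial\circ E$, and says nothing about $E\circ\partial$ on classes that are not suspensions --- which is exactly the interesting case, since $h_0$ vanishes on suspensions. Your proposed justification (``one checks the formulas on $\iota_n$'') cannot work: a homomorphism on $\pi_m(S^n)$ is not determined by a computation in $\pi_n(S^n)$, the boundary operator of $ST(S^n)$ is not of the form ``compose with $\partial(\iota_n)$'', and the correction term $[\iota_n,\iota_n]\circ h_0(\alpha)$ is precisely the part that such a check on the fundamental class cannot see. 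Moreover, outside the range $m\leqslant 3n-3$ one must also rule out (or absorb) higher Hopf--Hilton terms such as $\big[[\iota_n,\iota_n],\iota_n\big]\circ h_1(\alpha)$, an issue your formula silently suppresses; you flag this step yourself as ``the main obstacle'', but the proposal never supplies it.

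The paper closes exactly this gap by a different reduction: with $G$ trivial, Theorem 1.9 identifies condition (v), $E\circ\partial([\widetilde f])=0$, with looseness of $(\widetilde f,\widetilde f)$; by Dold--Gon\c{c}alves ([DG], 2.10) looseness means $[\widetilde f]=(-\iota_n)\circ[\widetilde f]$; and then the cited expansion results of Whitehead ([W], ch.~XI, 8.12 and ch.~XII, 2.4--2.5) convert this self-composition identity into (vi) and (vi$'$). So rather than asserting a closed formula for $E\circ\partial$, the paper never needs one: it only needs the known expansion of $(-\iota_n)\circ[\widetilde f]$ in Hopf--Hilton invariants. To repair your argument, either prove your $E\circ\partial$ formula (essentially by rerunning this Dold--Gon\c{c}alves plus Whitehead argument, or by a geometric analysis of the tangent sphere bundle analogous to Lemma 2.1 but on the nonsuspension side), or follow the paper's route directly; as it stands, the key identity is assumed rather than established.
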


When can failures of the Wecken condition occur, and which geometric consequences do they have?  It turns out that they are directly related to our Questions I and II.

\begin{cor}
Assume that $G$ is nontrivial.  Then, given $[f]\in\pi_m(S^n/G)$ and a lifting $[\widetilde{f}]\in\pi_m(S^n)$ of $[f]$, $m,n\geqslant 1$, the following conditions are equivalent:
\begin{itemize}
\item[(i)] $\partial([\widetilde{f}])\neq 0$ but $E\circ\partial([\widetilde{f}])=0$;

\item[(ii)] $MCC(f,f)\neq N^{\#}(f,f)$;

\item[(iii)] $N^{\#}(f,f)=0$ but $f$ is coincidence producing (i.e. the pair $(f,f')$ cannot be loose for $\mathrm{any}$ map $f': S^m\rightarrow N$; thus $MCC(f,f')\neq 0$);

\item[(iv)] $MCC(f,f) > MCC(\widetilde{f},\widetilde{f})$;

\item[(v)] $MC(f,f) > MC(\widetilde{f},\widetilde{f})$;

\item[(vi)] $(\widetilde{f},\widetilde{f})$ is loose, but $(f,f)$ is not loose;

\item[(vii)] $(\widetilde{f},\widetilde{f})$ is loose, but not by small deformation.
\end{itemize}
\end{cor}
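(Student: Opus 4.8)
The plan is to derive all seven equivalences from Theorem 1.9, from the identity $MC(\widetilde f,\widetilde f)=MCC(\widetilde f,\widetilde f)=N^{\#}(\widetilde f,\widetilde f)=N^{\#}(f,f)$ recorded right after it, and from the fact (also part of Theorem 1.9) that $MC(f,f)$, $MCC(f,f)$ and $N^{\#}(f,f)$ all take values in $\{0,1\}$, with $N^{\#}(f,f)\leqslant MCC(f,f)=MC(f,f)$. Concretely I would show that each of (i)--(vii) is equivalent to the single condition
\[
N^{\#}(f,f)=0\ \ \text{while}\ \ MCC(f,f)=1 ,
\]
which I abbreviate $(\ast)$. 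The only standing hypothesis available is $G\neq 0$, and that is precisely what turns the implication ``(ii)$\Rightarrow$(iii)'' of Theorem 1.9 into an equivalence, so that $\partial([\widetilde f])=0\Leftrightarrow(f,f)$ loose by small deformation $\Leftrightarrow MCC(f,f)=0$, whereas $N^{\#}(f,f)=0\Leftrightarrow E\circ\partial([\widetilde f])=0$ needs no hypothesis at all. (We do \textit{not} get the reverse of ``(iii)$\Rightarrow$(iv)'' when $G\cong\mathbb Z_2$, but that reverse is never needed here.)

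The conditions (i), (ii), (iv), (v), (vi) are then disposed of purely formally. By the two displayed chains, $MCC(f,f)=1\Leftrightarrow\partial([\widetilde f])\neq 0$ and $N^{\#}(f,f)=0\Leftrightarrow E\circ\partial([\widetilde f])=0$, so (i) is literally $(\ast)$. Condition (ii) is $MCC(f,f)\neq N^{\#}(f,f)$, which, since $N^{\#}(f,f)\leqslant MCC(f,f)$ and both lie in $\{0,1\}$, is again $(\ast)$. Using $MC(f,f)=MCC(f,f)$ and $MC(\widetilde f,\widetilde f)=MCC(\widetilde f,\widetilde f)=N^{\#}(f,f)$, both (iv) ($MCC(f,f)>MCC(\widetilde f,\widetilde f)$) and (v) ($MC(f,f)>MC(\widetilde f,\widetilde f)$) read $MCC(f,f)>N^{\#}(f,f)$, i.e.\ $(\ast)$; and (vi) asks that $MCC(\widetilde f,\widetilde f)=0$ but $MCC(f,f)\neq 0$, i.e.\ $N^{\#}(f,f)=0$ and $MCC(f,f)=1$, i.e.\ $(\ast)$. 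For (vii) I would apply Theorem 1.9 to $\widetilde f\colon S^m\to S^n$ itself (the acting group now trivial, hence $\not\cong\mathbb Z_2$): there ``loose by small deformation'' is equivalent to $\partial([\widetilde f])=0$, while ``loose'' is equivalent to $N^{\#}(\widetilde f,\widetilde f)=0$, hence to $E\circ\partial([\widetilde f])=0$; so (vii) says $E\circ\partial([\widetilde f])=0$ but $\partial([\widetilde f])\neq 0$, which is (i). (Note that $\partial([\widetilde f])$ and $E\circ\partial([\widetilde f])$ are the \textit{same} elements in the two applications of Theorem 1.9, both being read off the sequence (1.7).)

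The one condition of our statement still needing an argument is (iii). One direction is immediate: if $f$ is coincidence producing then in particular $(f,f)$ is not loose, so $MCC(f,f)=1$, and together with $N^{\#}(f,f)=0$ this is (iii), hence $(\ast)$. For the converse I would show that $(\ast)$ forces $f$ to be coincidence producing. Since $(\ast)$ entails $\partial([\widetilde f])\neq 0$, the sphere $S^n$ carries no nowhere-zero tangent vector field, so $n$ is even; as $G$ acts freely on $S^n$ we may take $G=\mathbb Z_2$ acting antipodally and $N=\mathbb{RP}^n$. Given any $f'\colon S^m\to\mathbb{RP}^n$ and a lift $\widetilde{f'}\colon S^m\to S^n$, the equality $p\widetilde f(x)=p\widetilde{f'}(x)$ holds iff $\widetilde f(x)=\pm\widetilde{f'}(x)$; hence, lifting homotopies, $(f,f')$ is loose iff the pair $(\widetilde f,\widetilde{f'})$ can be deformed, through maps into $S^n\times S^n$, into the complement $Y$ of the diagonal and the antidiagonal. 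Projection onto the first factor presents $Y$ as a bundle over $S^n$ with fibre $S^n\setminus\{\pm y\}\simeq S^{n-1}$, and sending $(a,b)$ to the unit tangent vector at $a$ in the direction of the (unique, as $b\neq -a$) short geodesic from $a$ to $b$ identifies $Y\simeq V_{n+1,2}=ST(S^n)$, in such a way that the inclusion $Y\hookrightarrow S^n\times S^n$ has first coordinate the bundle projection $p\colon V_{n+1,2}\to S^n$ of (1.7) and second coordinate homotopic to $p$ as well; so it induces on $\pi_m$ the map $\gamma\mapsto(p_*\gamma,p_*\gamma)$. Thus $(f,f')$ can be loose only when $[\widetilde f]=[\widetilde{f'}]$ and $[\widetilde f]\in\operatorname{im}(p_*)$; but the latter means $\partial([\widetilde f])=0$ by exactness of (1.7), contradicting $\partial([\widetilde f])\neq 0$. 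Hence no $f'$ makes $(f,f')$ loose, i.e.\ $f$ is coincidence producing.

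I expect the genuine work to sit entirely in this last step: justifying that looseness of $(f,f')$ is equivalent to compressibility of the lifted pair into $Y\simeq V_{n+1,2}$, and identifying the resulting homomorphism on $\pi_m$ so that the first coordinate is exactly the projection $p$ underlying (1.7) — that is, matching the general coincidence-obstruction picture (\textit{cf.}\ Section 2 and [Ko5]) with the specific fibration $V_{n+1,2}\to S^n$ and its boundary map $\partial$. (One should also make sure the compression criterion is insensitive to which free involution $g$ one takes, which it is, since only the first coordinate of the inclusion enters the contradiction.) Everything preceding this is bookkeeping with the equivalences already contained in Theorem 1.9 and the restriction of the relevant invariants to $\{0,1\}$.
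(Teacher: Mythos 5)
Your bookkeeping part is exactly the intended derivation: the paper itself obtains 1.11 from Theorem 1.9 (together with the identity $MC(\widetilde f,\widetilde f)=MCC(\widetilde f,\widetilde f)=N^{\#}(\widetilde f,\widetilde f)=N^{\#}(f,f)$ and the fact that all these invariants lie in $\{0,1\}$ with $N^{\#}\leqslant MCC$), deferring the details to [Ko5]; your reduction of (i), (ii), (iv), (v), (vi), (vii) to the single condition $(\ast)$, including the correct use of ``$G\neq 0$'' for (ii)$\Leftrightarrow$(iii) of 1.9 and the application of 1.9 with trivial group to handle (vii), is sound and matches that route. The only substantive step you had to supply yourself is the implication $(\ast)\Rightarrow$ ``$f$ is coincidence producing'' in (iii), which the paper covers by citation; your configuration-space argument (compress the lifted pair into the complement $Y$ of the diagonal and the graph of the involution, identify $Y$ over $S^n$ with $V_{n+1,2}$, and contradict $\partial([\widetilde f])\neq 0$ via exactness of (1.7)) is the right idea.

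There is, however, one genuine gap in how you set that step up: after concluding $G\cong\mathbb Z_2$ you assert ``we may take $G=\mathbb Z_2$ acting antipodally and $N=\mathbb{RP}^n$''. That is not legitimate: a smooth free involution $g$ on $S^n$ need not be conjugate to the antipodal map, and your explicit identification $Y\simeq V_{n+1,2}$ (short geodesic from $a$ to $b$, using $b\neq\pm a$) only makes sense for the antipodal action. Your parenthetical repair --- that the choice of $g$ doesn't matter ``since only the first coordinate of the inclusion enters'' --- does not address the real issue, which is whether $\operatorname{im}\bigl(p_{1*}:\pi_m(Y_g)\to\pi_m(S^n)\bigr)$ still equals $\ker\partial$ when $Y_g=\{(a,b):b\neq a,\ b\neq ga\}$ for an arbitrary free involution $g$; that image is $\ker$ of the boundary operator of the fibration $Y_g\to S^n$, not automatically of (1.7). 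The gap is fixable: $Y_g\to S^n$ is the complement of two disjoint sections of $S^n\times S^n\to S^n$, and fibrewise it deformation retracts onto the unit normal sphere bundle of the section $a\mapsto(a,ga)$, whose fibre over $a$ is the unit sphere of $T_{ga}S^n$; since $dg$ gives $g^*TS^n\cong TS^n$, this bundle is fibre homotopy equivalent to $ST(S^n)=V_{n+1,2}$, so $\operatorname{im}(p_{1*})=\ker\partial$ and your contradiction goes through for every free involution. (Equivalently one can invoke the general looseness-obstruction framework of [Ko1]--[Ko5], which is what the paper implicitly does.) With that repair your proof is complete.
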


None of these conditions can hold when $G\ncong\mathbb{Z}_2$ (since then $\chi (N)\cdot \# G\neq\chi (S^n)$) or when $n\not\equiv 0$ (2) or in the 'stable dimension range' $m-2n+3\leqslant 0$ where our suspension homomorphism
$$
E: \pi_{m-1}(S^{n-1})\rightarrow \pi_m(S^n)
$$
(\textit{cf.} 1.7 and 1.8) in injective by Freudenthal's theorem.

\begin{defn}
We call the integer
$$
q := m-2n+3
$$
the $\mathbf{degree\ of\ nonstability\ of\ E}$.
\end{defn}

In view of Theorem 1.9 and Corollary 1.11 let us take a closer look at the first few nonstable dimension settings $q=1,2,3,\ldots$ when $n$ is even and $G=\mathbb{Z}_2$ and hence $N=S^n/G$ is the orbit space of a fixed point free involution on $S^n$ (standard example: $N=\mathbb{RP}^n$).

Already in the first dimension setting outside of the stable range we encounter fascinating interrelations of coincidence theory with other, seemingly distant branches of topology.

\begin{thm1}
For all $[f]\in\pi_{2n-2}(S^n/\mathbb{Z}_2)$, as well as $[\widetilde{f}]\in\pi_{2n-2}(S^n)$ lifting $[f]$, $n$ even, $n\geqslant 2$, we have:
\begin{equation*}
N^{\#}(f,f) = \begin{cases}
0 & \text{if $2[\widetilde{f}]=0$};\\
1 & \text{otherwise.}
\end{cases}
\end{equation*}
\indent If $n\neq 2, 4, 8$, then $MCC(f,f)=0$ precisely when $N^{\#}(f,f)$ and the Kervaire invariant $KI([\widetilde{f}]):= KI(E^{\infty}([\widetilde{f}]))$ of $[\widetilde{f}]$ vanish.
If $n=2, 4, 8$ then $MCC(f,f) = N^{\#}(f,f)$.\\
\indent The Wecken condition $WeC(2n-2,n)$ fails if and only if $n=16,32$ or $64$ (or possibly $128$); in these dimensions $[f]$, $[\widetilde{f}]$ have the seven equivalent properties $(\mathrm{i}),\ldots,(\mathrm{vii})$ in Corollary 1.11 precisely if $[\widetilde{f}]$ has order $2$ and Kervaire invariant one.
\end{thm1}

Here we define the composite homomorphism $KI = KI\circ E^{\infty}$,
\begin{flushleft}
\begin{tikzpicture}
\matrix(m)[matrix of math nodes,
row sep=2.6em, column sep=2.8em,
text height=1.5ex, text depth=0.25ex]
{(1.14)\hspace{.82in} & KI: \pi_{2n-2}(S^n) & \pi_{n-2}^s & \mathbb{Z}_2,\\};
\path[->,font=\scriptsize,>=angle 90]
(m-1-2) edge node[above] {$E^{\infty}$} node[below] {$\cong$} (m-1-3)
(m-1-3) edge node[above] {$KI$} (m-1-4);
\end{tikzpicture}
\end{flushleft}
by the Kervaire invariant when $n\equiv 0$ (4) and by the trivial homomorphism otherwise; recall that the stable stem $\pi^s_{n-2}$ can also be interpreted as the $(n-2)$th framed bordism group.

Originally M. Kervaire introduced his invariant in order to exhibit a triangulable closed manifold which does not admit any differentiable structure (\textit{cf.} [K]).  Subsequently M. Kervaire and J. Milnor used it in their classification of exotic spheres (\textit{cf.} [KM]; see also [M]).  Then W. Browder showed that, given $[\widetilde{f}]\in\pi_{2n-2}(S^n)$, $KI([\widetilde{f}])=0$ whenever $n$ is not a power of 2 (\textit{cf.} [B]).  But for $n=16, 32$ or 64 there exists an element $[\widetilde{f}]\in\pi_{2n-2}(S^n)$ such that $KI([\widetilde{f}])=1$ and $2[\widetilde{f}]=0$ ('strong Kervaire invariant one conjecture'; \textit{cf.} [BJM1], pp. 10-11, and [Co], p. 1195); e.g. the authors of [BJM2] construct Kervaire invariant one elements in $\pi^s_{62}\cong\mathbb{Z}_4\oplus \mathbb{Z}_2\oplus \mathbb{Z}_2\oplus \mathbb{Z}_3$ (\textit{cf.} [KMa]).  On the other hand, according to the spectacular recent results of M. Hill, M. Hopkins and D. Ravenel $KI([\widetilde{f}])\equiv 0$ whenever $n>128$ (\textit{cf.} [HHR]).  Only the case $n=128$ remains open.  This open question turns out to be closely related to selfcoincidences.
\setcounter{prop}{14}
\begin{thm}
The following conditions are equivalent:
\begin{itemize}
\item[(i)] there exists no Kervaire invariant one element in $\pi^s_{126}\cong \pi_{254}(S^{128})$ (in particular, $KI(E^{12}([h]))=0$ for all $[h]\in\pi_{242}(S^{116}))$;

\item[(ii)] there exists a suspended element $[Eg]$ in $E(\pi_{240}(S^{115}))\subset \pi_{241}(S^{116})$ such that $(Eg,Eg)$ is loose, but not loose by small deformation.
\end{itemize}
\end{thm}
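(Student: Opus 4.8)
The plan is to recast (ii) as a statement about the image of $(\iota_{115}+\iota_{115})_{*}$ and then to link that statement to the classical range $q=1$ at $n=128$. To recast (ii): apply Theorem~1.9 with $G$ trivial and $N=S^{116}$ to $f=Eg\colon S^{241}\to S^{116}$ (which lifts to itself). Then $(Eg,Eg)$ is loose by small deformation iff $\partial(Eg)=0$ in $\pi_{240}(S^{115})$, and — since $G=0\ncong\mathbb Z_2$ — it is loose iff $N^{\#}(Eg,Eg)=0$ iff $E\circ\partial(Eg)=0$. Hence (ii) holds exactly when some $g$ has $\partial(Eg)$ a nonzero element of $K:=\ker\!\big(E\colon\pi_{240}(S^{115})\to\pi_{241}(S^{116})\big)$. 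By the classical relation $\partial\circ E=(\iota_{115}+\iota_{115})_{*}$ for the tangent sphere bundle of the even sphere $S^{116}$ (the relation underlying Corollary~1.10), this reads: (ii) holds iff $v\in(\iota_{115}+\iota_{115})_{*}\big(\pi_{240}(S^{115})\big)$, where $v$ is a generator of $K$ — granted $K\cong\mathbb Z_2$, which is verified next.

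The EHP-sequence of $S^{115}$ gives $K=\mathrm{im}\!\big(P\colon\pi_{242}(S^{231})\to\pi_{240}(S^{115})\big)$, and the formula $P(E^{2}\beta)=[\iota_{115},\iota_{115}]\circ\beta$ identifies $K=[\iota_{115},\iota_{115}]\circ\pi^{s}_{11}$ (with $\pi_{242}(S^{231})\cong\pi_{240}(S^{229})\cong\pi^{s}_{11}\cong\mathbb Z_{504}$). Since $115\notin\{1,3,7\}$ the Whitehead square $[\iota_{115},\iota_{115}]$ has order $2$, so $K$ is cyclic of order $\le2$; a short computation in the $11$-stem (using that $\pi^{s}_{11}$ is cyclic on an image-of-$J$ class) shows $K\cong\mathbb Z_2$, say with generator $v=[\iota_{115},\iota_{115}]\circ\bar\mu$ for a generator $\bar\mu$ of $\pi^{s}_{11}$. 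Now expand $(\iota_{115}+\iota_{115})_{*}=2\cdot\mathrm{id}+[\iota_{115},\iota_{115}]\circ H$ (Hilton--Hopf, with no higher terms in the metastable range) and note that the correction term $[\iota_{115},\iota_{115}]\circ H(g)$ always lies in $K$; so (ii) holds iff either $v\in2\,\pi_{240}(S^{115})$, or some $g$ has $2g=0$ and $[\iota_{115},\iota_{115}]\circ H(g)=v$. In short, (ii) is equivalent to a precise divisibility-by-$2$ question about the Whitehead-square composite $v$ in $\pi_{240}(S^{115})$.

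The crux — and the hardest step — is to show that this divisibility question is \emph{complementary} to the divisibility by $2$ of the Whitehead square $[\iota_{127},\iota_{127}]$ in $\pi_{253}(S^{127})$: namely that $v$ is halvable exactly when $[\iota_{127},\iota_{127}]$ is \emph{not}. Here I would transport $[\iota_{127},\iota_{127}]$ down past $S^{116}$ to the composite $v$ on $S^{115}$ via James periodicity together with the EHP-spectral-sequence description of Whitehead squares, using the known values of $\pi^{s}_{k}$ for small $k$ and the behavior of the $J$-homomorphism through the $11$-stem; the delicate feature is the parity that turns an expected coincidence of the two divisibility statements into a negation. Granting this, the proof finishes with the $q=1$ analysis at $n=128$: Corollary~1.10 applied to $(254,128)$ — where $\partial\circ E=2\cdot\mathrm{id}$ since $127$ is odd, and $K'=\ker\!\big(E\colon\pi_{253}(S^{127})\to\pi_{254}(S^{128})\big)$ is generated by $[\iota_{127},\iota_{127}]$ — shows that $WeC(254,128)$ fails iff $[\iota_{127},\iota_{127}]$ is halvable, which by Theorem~1.13 and the classical identification of this divisibility with the Kervaire invariant (together with Browder's theorem and the Hill--Hopkins--Ravenel theorem, which leave $126$ as the only undecided stem) is equivalent to the existence of a Kervaire-invariant-one element in $\pi^{s}_{126}\cong\pi_{254}(S^{128})$. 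Chaining the equivalences: (ii) $\iff$ $v$ halvable $\iff$ $[\iota_{127},\iota_{127}]$ not halvable $\iff$ there is no Kervaire-invariant-one element in $\pi^{s}_{126}$, which is (i); the parenthetical in (i) is automatic since $KI=KI\circ E^{\infty}$ factors through stabilization.

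The main obstacle is this complementarity claim. Everything preceding it is bookkeeping with Theorem~1.9, the Hilton--Hopf formula and the EHP-sequence, but matching the halvability of $v$ in $\pi_{240}(S^{115})$ with the \emph{non}-halvability of $[\iota_{127},\iota_{127}]$ in $\pi_{253}(S^{127})$ requires control not merely of the orders but of the fine $2$-primary structure of these unstable groups at the edge of their metastable ranges, the relevant portion of the $J$-homomorphism, and a careful application of James periodicity — in particular the isolation of the parity phenomenon responsible for the negation rather than an equality.
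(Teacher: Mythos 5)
Your reduction of (ii) via Theorem 1.9, the Hilton--Hopf expansion and the EHP-sequence is fine as far as it goes, but the argument then breaks at its two load-bearing points. First, you assert that a ``short computation in the 11-stem'' gives $\ker\bigl(E\colon\pi_{240}(S^{115})\to\pi_{241}(S^{116})\bigr)\cong\mathbb{Z}_2$. This kernel is the image of $P\colon\pi_{242}(S^{231})\to\pi_{240}(S^{115})$, generated on the $2$-primary part by $P(\zeta_{231})=[\zeta_{115},\iota_{115}]$, and whether this element is zero is precisely the open question: by Theorem 3.1 of [LR] together with Theorem A of [BJM3], $[\zeta_{115},\iota_{115}]=0$ if and only if a Kervaire invariant one element exists in $\pi^s_{126}$. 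So the unconditional claim $\ker E\cong\mathbb{Z}_2$ is equivalent to condition (i) itself and cannot be ``computed''; your proof begs the question at this point. Second, the step you yourself identify as the crux --- that halvability of your $v$ in $\pi_{240}(S^{115})$ is \emph{complementary} to halvability of $[\iota_{127},\iota_{127}]$ in $\pi_{253}(S^{127})$ --- is only sketched (``James periodicity'', a ``parity phenomenon'') and is moreover misdirected: halvability of the Whitehead square $[\iota_{127},\iota_{127}]$ corresponds (via [BP], as in Section 4) to the \emph{strong} form, i.e.\ existence of an \emph{order two} Kervaire invariant one element, which is not the same statement as (i), and no argument is offered to identify the two.

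The paper's proof uses concrete inputs that your outline never invokes and which cannot be replaced by EHP bookkeeping: (a) the Lam--Randall theorem converting (i) into the single statement $[\zeta_{115},\iota_{115}]\neq 0$ (with [BJM3, Thm.\ A] supplying the converse direction via a map of Hopf invariant $\zeta_{231}$), and (b) Nomura's computation, transported through the fibration $SO(115)\to SO(128)\to V_{128,13}$ and the $J$-homomorphism, showing that $[\zeta_{115},\iota_{115}]$ is divisible by $8$ in $\pi_{240}(S^{115})$. Writing $[\zeta_{115},\iota_{115}]=8\alpha$ and taking $g$ to represent $4\alpha$, one gets $2[Eg]=0$ (so $(Eg,Eg)$ is loose) while $\partial([Eg])=2[g]+[\iota_{115},\iota_{115}]\circ h_0([g])=2[g]=[\zeta_{115},\iota_{115}]\neq 0$ (so not loose by small deformation), which is (i)$\Rightarrow$(ii); the converse is exactness of the EHP-sequence forcing $P(\zeta_{231})\neq 0$. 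In particular the relevant divisibility question is not a delicate parity complement of the $n=128$ Whitehead-square problem: $[\zeta_{115},\iota_{115}]$ is \emph{always} divisible by $8$, and the only issue is whether it vanishes. Without inputs of type (a) and (b) your central equivalence remains unproved, so the proposal has a genuine gap.
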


The results 1.13 and 1.15 involving the Kervaire invariant will be proved in Section 4 below.

In the next nonstable dimension setting the Wecken condition fails in infinitely many exceptional dimension combinations.  Recall that for $n$ even the group $\pi_{2n-1}(S^n)$ is the direct sum of its torsion subgroup and a copy of $\mathbb{Z}$ generated by the Hopf maps when $n=2,4$ or 8, and by $[\iota_n,\iota_n]$ otherwise.

\begin{thm2}
Consider any $[f]\in\pi_{2n-1}(S^n/\mathbb{Z}_2)$, and $[\widetilde{f}]\in\pi_{2n-1}(S^n)$ lifting $[f]$.\\
\indent If $n\neq 4,8$, $n$ even, then
\begin{equation*}
N^{\#}(f,f) = \begin{cases}
0 & \text{if the torsion part of $[\widetilde{f}]$ has order $\leqslant 2$};\\
1 & \text{otherwise}.
\end{cases}
\end{equation*}
\indent If $n\equiv 0$ $(4)$ or $n=2$, then $MCC(f,f)=N^{\#}(f,f)$.\\
\indent If $n\equiv 2$ $(4)$, $n\geqslant 6$, the Wecken condition $WeC(2n-1,n)$ does \underline{not} hold; moreover, $MCC(f,f)=0$ precisely when $N^{\#}(f,f)=0$ and the Hopf invariant $H(\widetilde{f})$ of $\widetilde{f}$ is divisible by $4$; in particular, $[f]$ and $[\widetilde{f}]$ satisfy the seven equivalent conditions $(\mathrm{i}),\ldots,(\mathrm{vii})$ in 1.11 precisely when the torsion part of $[\widetilde{f}]$ has order $\leqslant 2$ and $H(\widetilde{f})\equiv 2$ $(4)$.
\end{thm2}

This and the following theorem will be proved in Section 5 below.

\begin{thm3}
Consider any $[f]\in \pi_{2n}(S^n/\mathbb{Z}_2)$, and $[\widetilde{f}]\in\pi_{2n}(S^n)$ lifting $[f]$.\\
\indent If $n\equiv 0$ $(4)$ or $n=2$ or $n=6$, then $\mathrm{ker}E=0$ and
\begin{equation*}
MCC(f,f) = N^{\#}(f,f) = \begin{cases}
0 & \text{if $2[\widetilde{f}]=[\iota_n,\iota_n]\circ h_0(\widetilde{f})$};\\
1 & \text{otherwise}.
\end{cases}
\end{equation*}
(Here $h_0$ denotes the $0$th Hopf-Hilton invariant as in 1.10, $(\mathrm{vi})$).  \\
\indent Now assume that $n\equiv 2$ $(4)$ and $n\geqslant 10$.  Then $E$ is surjective with kernel $\mathrm{ker}E\cong \mathbb{Z}_2$, generated by $[\iota_{n-1}, \eta^2_{n-1}]\in 4\cdot\pi_{2n-1}(S^{n-1})$; in particular a desuspension $E^{-1}([\widetilde{f}])\in \pi_{2n-1}(S^{n-1})$ of $[\widetilde{f}]$ exists, and $2\cdot E^{-1}[\widetilde{f}])$ is well-defined.  Moreover the Wecken condition $WeC(2n,n)$ fails to hold and we have
\begin{equation*}
N^{\#}(f,f) = \begin{cases}
0 & \text{if $2[\widetilde{f}]=0$};\\
1 & \text{otherwise};
\end{cases}
\end{equation*}
and
\begin{equation*}
MCC(f,f) = \begin{cases}
0 & \text{if $2\cdot E^{-1}([\widetilde{f}])=0$};\\
1 & \text{otherwise}.
\end{cases}
\end{equation*}
Thus $MCC(f,f)\neq N^{\#}(f,f)$ if and only if\ \ $2[\widetilde{f}]=0$ but $2\cdot E^{-1}([\widetilde{f}])=[\iota_{n-1},\eta^2_{n-1}]$ (compare 1.10).
\end{thm3}

Further results concerning the Wecken condition in the first 8 nonstable dimension settings can be found in \textbf{Table 1.18} below.  They are based on complete injectivity and surjectivity criteria for $E$ (extracted from the literature and listed in the first two columns of \textbf{Table 1.18}) as well as from the following proposition.

\setcounter{prop}{18}
\begin{prop}
Assume $q\leqslant 8$ (\textit{cf.} 1.12) and $n\equiv 0$ $(2)$, $n\geqslant 2$.  Then either $\mathrm{ker}E=0$ (and therefore the Wecken condition holds) or $\mathrm{ker} E\cong\mathbb{Z}_2$ (and nearly always generated by a suitable Whitehead product).
\end{prop}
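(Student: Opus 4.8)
The plan is to pin down $\ker\bigl(E\colon\pi_{m-1}(S^{n-1})\to\pi_m(S^n)\bigr)$, where $m=2n-3+q$, by feeding the James (EHP) fibration into an exact sequence. Since $n$ is even, $n-1$ is odd, so the fibration $S^{n-1}\to\Omega S^n\to\Omega S^{2n-1}$ exists \emph{integrally}; its exact homotopy sequence gives
$$
\ker\bigl(E\colon\pi_{m-1}(S^{n-1})\to\pi_m(S^n)\bigr)=\operatorname{im}\bigl(P\colon\pi_{m+1}(S^{2n-1})\to\pi_{m-1}(S^{n-1})\bigr),
$$
where $P$ is the EHP boundary. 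The arithmetic observation that drives everything is $m+1=2n-2+q$: thus $\pi_{m+1}(S^{2n-1})$ is the $(q-1)$-stem of $S^{2n-1}$, and since $q\leqslant 8$ this is at most the $7$-stem.

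I would first handle $n=2,4,8$ separately. For these $S^{n-1}$ is an $H$-space and the appropriate Hopf fibration yields the splitting $\Omega S^n\simeq S^{n-1}\times\Omega S^{2n-1}$, so $E$ is a split monomorphism and $\ker E=0$ (for $n=2$ one may simply note that $\pi_{m-1}(S^1)$ is either $0$ or $\mathbb{Z}$ with $E$ an isomorphism). For every remaining even $n$ we have $n\geqslant 6$, hence $2n-1\geqslant 11$ while $q+1\leqslant 9$; consequently $\pi_{m+1}(S^{2n-1})$ and $\pi_{m-1}(S^{2n-3})$ lie in the stable range, $\pi_{m+1}(S^{2n-1})\cong\pi^s_{q-1}$, and every element of $\pi_{m-1}(S^{2n-3})$ is a suspension.

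Here is the point of the hypothesis $q\leqslant 8$: for $0\leqslant q-1\leqslant 7$ the stable stem $\pi^s_{q-1}$ runs through $\mathbb{Z},\,\mathbb{Z}_2,\,\mathbb{Z}_2,\,\mathbb{Z}_{24},\,0,\,0,\,\mathbb{Z}_2,\,\mathbb{Z}_{240}$, each of which is \emph{cyclic} — the first noncyclic stable stem being $\pi^s_8\cong\mathbb{Z}_2\oplus\mathbb{Z}_2$. Combining this with the classical EHP formula $P\circ E^2=\pm\,[\iota_{n-1},\iota_{n-1}]\circ(-)$ (cf. [W], ch. XI) and writing $\beta_0$ for a generator of the cyclic group $\pi_{m-1}(S^{2n-3})$, we obtain
$$
\ker E=\operatorname{im}P=\bigl\langle\,[\iota_{n-1},\iota_{n-1}]\circ\beta_0\,\bigr\rangle\subseteq\pi_{m-1}(S^{n-1}),
$$
a cyclic group generated by a Whitehead product (literally $[\iota_{n-1},\iota_{n-1}]$ when $q=1$; in general a relative Whitehead product $[\iota_{n-1},\beta_0']$ with $\beta_0'$ an unstable representative of the generator of the $(q-1)$-stem). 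Finally, $n-1$ odd forces $2[\iota_{n-1},\iota_{n-1}]=0$ by graded commutativity, and since $\beta_0$ is a suspension, precomposition with $\beta_0$ is additive, so $2\bigl([\iota_{n-1},\iota_{n-1}]\circ\beta_0\bigr)=\bigl(2[\iota_{n-1},\iota_{n-1}]\bigr)\circ\beta_0=0$; hence $\ker E$ has order $1$ or $2$, which is the assertion. The qualifier ``nearly always'' absorbs exactly the special values $n=2,4,8$, where $\ker E$ vanishes and a generating Whitehead product is vacuous.

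The step I expect to be the main obstacle is the bookkeeping at the boundary of the stable range: deciding precisely which pairs $(n,q)$ with $n$ even and $q\leqslant 8$ fall outside the stable-range identification and then checking the dichotomy for them by hand — this is where the integrality of the James fibration for $n-1$ odd and the $H$-space splittings at $n=2,4,8$ are indispensable — together with quoting the EHP boundary formula with the correct sign. Once these routine but delicate ingredients are in place, the cyclicity of $\pi^s_{q-1}$ for $q\leqslant 8$ and the $2$-torsion of the Whitehead square $[\iota_{n-1},\iota_{n-1}]$ finish the argument.
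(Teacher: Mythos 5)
Your argument follows essentially the same skeleton as the paper's (Section 3): identify $\ker E$ via the EHP sequence with the image of a boundary map defined on $\pi_{m+1}(S^{2n-1})\cong\pi^s_{q-1}$, observe that these stems are cyclic for $q\leqslant 8$, and conclude that the kernel is cyclic generated by a Whitehead product which is killed by $2$ because $n-1$ is odd. Your replacement of the paper's order tables (3.2)--(3.7) from [GM] by the structural argument $2[\iota_{n-1},\iota_{n-1}]=0$ plus left distributivity over the suspension $\beta_0$ is a nice simplification and is correct in the range where you apply it.

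The one step I would not accept as stated is the opening claim that, because $n-1$ is odd, the James fibration $S^{n-1}\to\Omega S^n\to\Omega S^{2n-1}$ ``exists integrally,'' so that $\ker E=\operatorname{im}P$ in \emph{all} degrees. The standard results give this fibration (hence exactness) only after localizing at $2$, and give integral exactness of the EHP sequence only in the metastable range $m\leqslant 3n-5$, i.e.\ $n\geqslant q+2$ -- which is exactly the hypothesis under which the paper invokes (3.8), citing [W] XII, 2.3 and 2.5. For $q\leqslant 8$ this leaves finitely many pairs outside that range (even $n=6,8$ with $q$ large), and for those your appeal to an integral fibration is an unsupported citation; it is precisely here that the paper switches to Toda's explicit tables, using Serre's theorem ([T], 13.1) that $\ker E$ has no odd torsion since $n$ is even. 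Your argument can be repaired the same way without new ideas: $\ker E$ is $2$-primary by Serre, and the $2$-local James fibration already yields $\ker E=\operatorname{im}P_{(2)}$, after which your stability bookkeeping ($2n-1\geqslant 9$ and $2n-3\geqslant q+1$ hold for $n\geqslant 6$, $q\leqslant 8$) and the order-$2$ computation go through; alternatively, treat the exceptional $(n,q)$ by direct inspection as the paper does. Note also that the paper's hedge ``nearly always generated by a suitable Whitehead product'' is aimed at exactly these boundary cases, where $P$ is no longer literally given by composition with $[\iota_{n-1},\iota_{n-1}]$; your text attributes the hedge instead to $n=2,4,8$, which is not where the caveat is needed.
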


This follows from the work of many authors, collected and completed by M. Golasi\'{n}ski and J. Mukai in [GM].  Actually these sources allow us also to decide which of the two alternatives hold.

\begin{flushleft}
\textbf{Example 1.20: q=6.}  If $n\geqslant 2$ is even and a discrete group $G$ acts freely and smoothly on $S^n$, then we have for every map $f: S^{2n+3}\rightarrow S^n/G$
\begin{equation*}
MC(f,f) = MCC(f,f) = N^{\#}(f,f) = \begin{cases}
0 & \text{if $2[f]=0$ (e.g. if $n=2, 6$ or 14)};\\
1 & \text{otherwise}.
\end{cases}
\end{equation*}
This is mainly due to the fact that the stable homotopy groups $\pi^s_4$ and $\pi^s_5$ vanish.
\end{flushleft}
\begin{tab118}
The suspension homomorphism $E: \pi_{m-1}(S^{n-1})\rightarrow \pi_m(S^n)$ and the Wecken condition for $m,n\geqslant 2$, $n$ even.  (We adopt the notation of [GM]; e.g. $\eta_2$, $\nu_4$, $\sigma_8$ denote the Hopf maps).
\end{tab118}
\begin{center}
\begin{tabularx}{6.4in}{|Z|Z|Z|Z|}
\hline
$m$ & $E$ is injective iff & $E$ is onto iff & The Wecken condition $WeC(m,n)$ fails to hold\\\hline

$m\leqslant 2n-3$\hspace{1in} $(q\leqslant 0)$ & always & always & never\\\hline

$m=2n-2$\hspace{1in} $(q=1)$ & $n=2,4,8$ & always & iff $n=16$, 32 or 64 (or possibly 128)\\\hline

$m=2n-1$\hspace{1in} $(q=2)$ & $n\equiv 0$ (4) or $n=2$ & never & iff $n\equiv 2$ (4), $n\geqslant 6$ \\\hline

$m=2n$\hspace{1in} $(q=3)$ & $n\equiv 0$ (4) or $n=2,6$ & $n\equiv 2$ (4), $n\geqslant 6$ & iff $n\equiv 2$ (4), $n\geqslant 10$ \\\hline

$m=2n+1$\hspace{1in} $(q=4)$ & $n\equiv 0$ (8) or $n=4$ or $n=2^i-2$ for $i\geqslant 2$ & $n\equiv 2$ (4), $n\geqslant 10$ & e.g. if $n\equiv 2$ $(4)$ and $n\geqslant 10$ and $n\neq 2^i-2$ for $i\geqslant 4$ and $[\iota_{n-1},\nu_{n-1}]$ can be halved \\\hline

$m=2n+2$\hspace{1in} $(q=5)$ & always & never & never \\\hline

$m=2n+3$\hspace{1in} $(q=6)$ & always & $n\geqslant 4$ & never \\\hline

$m=2n+4$\hspace{1in} $(q=7)$ & $n\equiv 6,8$ (8) or $n=2^i-4$ for $i\geqslant 3$ or $n=2$ & $n\geqslant 4$ &  iff $n\equiv 2,4$ (8) and $n\geqslant 10$ and $n\neq 2^i-4$ for $i\geqslant 4$ and $[\iota_{n-1},\nu^2_{n-1}]$ can be halved\\\hline

$m=2n+5$\hspace{1in} $(q=8)$ & $n\equiv 0$ (16)  or $n=2,4,8,12$ & $n\equiv 2,4$ (8) and $n\neq 2^i-4$ for $i\geqslant 4$ and $n\geqslant 10$ & e.g. if $n\not\equiv 0$ (16) and $n\geqslant 14$ and $[\iota_{n-1},\sigma_{n-1}]$ can be halved\\\hline
\end{tabularx}
\end{center}

\section{Minimum numbers and Nielsen numbers}

Minimum numbers and four types of Nielsen numbers (agreeing with the classical notions in the setting of fixed point theory) were discussed in great detail in the survey paper [Ko5], and so were various 'Wecken theorems' specifying conditions where a minimum number equals a Nielsen number.  In particular, proofs of the results 1.5, 1.9 and 1.11 were indicated in [Ko5], sometimes in the more general context of maps from $S^m$ into an arbitrary smooth connected $n$-dimensional manifold $N$ (for more background see [Ko1] - [Ko4]; see also [C] for some work on related questions).

It is not surprising that the boundary homomorphism
$$
\partial_N: \pi_m(N)\rightarrow \pi_{m-1}(S^{n-1})
$$
of the homotopy exact sequence of the tangent sphere bundle $ST(N)$, fibered over $N$, plays an important role.  So let us describe it geometrically.

Decompose $S^m$ into half-spheres $S^m_+$ and $S^m_-$, and consider the pullback $\eta = f^*(ST(N))$ of the tangent sphere bundle of $N$ by $f: S^m\rightarrow N$.  Choose a section $s$ of $\eta|_{S^m_-}$ and use a (suitably orientation preserving) trivialization $\eta|_{S^m_+}\cong S^m_+\times S^{n-1}$ to interpret $s|_{S^{m-1}}$ as an 'index map' into the fibre $S^{n-1}$.  The resulting homotopy class agrees with $\partial_N[f]$.

Often it seems difficult to compute $\partial_N$, even when $N=S^n$ (and hence $\partial_N=\partial$, \textit{cf.} 1.7).  However, here is a well known partial result.

\begin{lem}
The composite homomorphism
$$
\partial\circ E: \pi_{m-1}(S^{n-1})\rightarrow \pi_{m-1}(S^{n-1})
$$
is induced by the self map of $S^{n-1}$ having degree $\chi (S^n)=1+(-1)^n$.
\end{lem}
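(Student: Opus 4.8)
The plan is to compute $\partial\circ E$ by pulling the tangent sphere bundle $ST(S^n)$ back along a suspension and reading off the resulting clutching function. Fix $\alpha\in\pi_{m-1}(S^{n-1})$, so that $E\alpha=\Sigma\alpha\colon S^m=\Sigma S^{m-1}\to\Sigma S^{n-1}=S^n$. The connecting homomorphism of a fibration is natural under pullback: the canonical bundle map $(E\alpha)^*ST(S^n)\to ST(S^n)$ covers $E\alpha$ and is the identity on fibres $S^{n-1}$, so the ladder of homotopy exact sequences gives $\partial'=\partial\circ(E\alpha)_*$, where $\partial'$ is the connecting homomorphism of the fibration $(E\alpha)^*ST(S^n)\to S^m$. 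Evaluating at $\iota_m$ yields $\partial(E\alpha)=\partial\bigl((E\alpha)_*\iota_m\bigr)=\partial'(\iota_m)$, so it suffices to understand the pulled-back fibration over $S^m$.

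Next I would describe that fibration via a clutching function. Write $S^n=D^n_+\cup_{S^{n-1}}D^n_-$; over each hemisphere $TS^n$ is trivial, and along the equator the two trivializations differ by a clutching map $\tau_n\colon S^{n-1}\to SO(n)$. Composing $\tau_n$ with the orbit map $j\colon SO(n)\to SO(n)/SO(n-1)=S^{n-1}$ produces a self-map $j\circ\tau_n$ of $S^{n-1}$ whose degree is the Euler number $\chi(S^n)=1+(-1)^n$; this is the classical computation of the Euler class of $TS^n$, equivalently the statement $\partial(\iota_n)=\chi(S^n)\,\iota_{n-1}$, which already underlies the observation recorded after Definition 1.8 that $\partial$ vanishes for $n$ odd. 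Since $E\alpha=\Sigma\alpha$ maps the hemispheres of $S^m$ to the hemispheres of $S^n$ and restricts to $\alpha$ on the equator $S^{m-1}$, the bundle $(E\alpha)^*TS^n$ is again trivial over each hemisphere of $S^m$ and has clutching function $\tau_n\circ\alpha\colon S^{m-1}\to SO(n)$.

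Finally I would invoke the standard formula for the connecting homomorphism of the unit sphere bundle of a rank-$n$ vector bundle $V\to S^m$ with clutching function $c\colon S^{m-1}\to SO(n)$: in the two hemisphere trivializations of $S(V)$ one has constant sections over each hemisphere, and their difference along the equator — which by definition represents $\partial'(\iota_m)$, and which is exactly the ``index map'' appearing in the geometric description of $\partial_N$ recalled earlier in this section — is the composite $j\circ c$. Taking $c=\tau_n\circ\alpha$ gives $\partial(E\alpha)=\partial'(\iota_m)=j\circ\tau_n\circ\alpha=(j\circ\tau_n)\circ\alpha$. As $j\circ\tau_n$ is a (hence, up to homotopy, the) self-map of $S^{n-1}$ of degree $1+(-1)^n$, this says precisely that $\partial\circ E$ is induced by that self-map; the special case $m=n$, $\alpha=\iota_{n-1}$ recovers $\partial(\iota_n)=\chi(S^n)\,\iota_{n-1}$ as a consistency check.

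I expect the only delicate point to be orientation/convention bookkeeping — which hemisphere trivialization is used, and how $j$ and the fibre identification are oriented — when matching $(E\alpha)^*TS^n$ to its clutching function and when converting a clutching function into $\partial'(\iota_m)$. Different conventions change the answer only by an overall sign, which is harmless here because $1+(-1)^n\geq 0$. A secondary issue is making the naturality step base-point clean; this is automatic from the hemisphere picture, and one may in fact bypass vector bundles entirely by describing $(E\alpha)^*ST(S^n)$ directly as two copies of $D^m\times S^{n-1}$ glued along $S^{m-1}\times S^{n-1}$ by $(x,v)\mapsto\bigl(x,\tau_n(\alpha(x))\cdot v\bigr)$ and reading off $\partial'(\iota_m)$ straight from the index-map description of $\partial$ given earlier in this section.
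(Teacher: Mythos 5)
Your argument is correct and is in essence the same as the paper's: the paper chooses the section $s$ in its geometric description of $\partial$ to be the pullback along the suspension of a vector field on $S^n$ with index $\chi(S^n)$, and its equatorial ``index map'' is exactly your composite $j\circ\tau_n$ of degree $1+(-1)^n$, precomposed with $\alpha$; your naturality-plus-clutching-function packaging is just a more formal write-up of that two-line computation. The only caveat is your remark that a conventions-induced sign flip is ``harmless because $1+(-1)^n\geq 0$'' -- for $n$ even a degree $-2$ map is not homotopic to a degree $2$ map, so the sign must be pinned down by the orientation conventions (as your consistency check $\partial(\iota_n)=\chi(S^n)\,\iota_{n-1}$ indeed does), not dismissed.
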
\vspace{-4mm}

\begin{proof}
If $f$ is the suspension of some map $f': S^{m-1}\rightarrow S^{n-1}$ in the geometric description of $\partial_N(f)$ above, the decompositions of $S^m$ and $S^n$ into the half-spheres are preserved.  Thus $s$ can be chosen to be the pullback of a vector field on $S^n$ with index $\chi (S^n)$.
\end{proof}\vspace{-4mm}

This gives no new insights when $n$ is odd and hence $\partial\equiv 0$ on the whole group $\pi_m(S^n)$ (and not just on $E(\pi_{m-1}(S^{n-1}))$), due to a nowhere vanishing vector field on $S^n$ and the resulting splitting of 1.7.

If $n$ is even and $\alpha\in\pi_{m-1}(S^{n-1})$ then according to Theorem 8.9 in [W], ch. XI, we have
\setcounter{equation}{1}
\begin{equation}
\partial\circ E(\alpha) = (2\cdot\iota_{n-1})\circ\alpha = 2\alpha + [\iota_{n-1},\iota_{n-1}]\circ h_0(\alpha) - \big[[\iota_{n-1},\iota_{n-1}],\iota_{n-1}\big]\circ h_1(\alpha)
\end{equation}
where $h_0$ and $h_1$ denote Hopf-Hilton homomorphisms.  (The last term to the right vanishes trivially when $m\leqslant 3n-5$, \textit{i.e.} $n\geqslant q+2$, since then $h_1(\alpha)\in\pi_{m-1}(S^{3n-5})=0$.)

\begin{flushleft}\textit{Proof of Corollary 1.10.}  We apply Theorem 1.9 to the case when $G$ is trivial and conclude that condition (v) is equivalent to $(\widetilde{f},\widetilde{f})$ being loose.  But this means that $\widetilde{f}$ is homotopic to its composite with the antipodal map on $S^n$ or, equivalently, that $[\widetilde{f}] = (-\iota_n)\circ [\widetilde{f}]$ (\textit{cf.} [DG], 2.10).  Thus Corollary 1.10 follows from [W], ch. XI, 8.12 and ch. XII, 2.4-2.5, as well as from Lemma 2.1 above.\hspace{4.51in}$\Box$\end{flushleft}
\section{Whitehead products and suspensions}

Since we are looking for interesting answers to our Questions I and II \textit{we assume from now on that $n$ is even}.  Let us first discuss the claims in Table 1.18 concerning the kernel and cokernel of $E: \pi_{m-1}(S^{n-1})\rightarrow \pi_m(S^n)$.  They are based on work of Adams, Barcus, Barratt, Freudenthal, Golasinski, Hilton, Hoo, James, Kristensen, Madsen, Mahowald, Mimura, Mori, Mukai, Oda, Oshima, Serre, Thomeier, Toda, Whitehead and others, as summarized in [GM] and quoted below.

Using the notation of [GM] we write $\eta_2\in\pi_3(S^2)$, $\nu_4\in\pi_7(S^4)$ and $\sigma_8\in\pi_{15}(S^8)$ for the Hopf maps.  Suspend and/or compose them to obtain the elements
\begin{align}
\iota_j & =[\mathrm{identity\ map}]\in\pi_j (S^j),\ j\geqslant 1;\notag\\
\eta_j & \in\pi_{j+1}(S^j)\ \mathrm{and}\ \eta^2_j = \eta_j\circ\eta_{j+1}\in\pi_{j+2}(S^j),\ j\geqslant 2;\\
\nu_j & \in\pi_{j+3}(S^j)\ \mathrm{and}\ \nu^2_j = \nu_j\circ\nu_{j+3}\in\pi_{j+6}(S^j),\ j\geqslant 4;\notag\\
\sigma_j & \in\pi_{j+7}(S^j),\ j\geqslant 8\notag.
\end{align}

According to the Hopf-invariant-one result of Adams [A] and to Section 2 of [GM] we have:
\begin{align*}
&\left.\hspace{2pt}(3.2) \ \ \# [\iota_j,\iota_j]  = \begin{cases}
1 & \text{if $j= 1, 3$ or 7};\\
2 & \text{if $j$ is odd, $j\neq 1, 3, 7$};\\
\infty & \text{if $j$ is even}.
\end{cases}\hspace{2.47in}\right\} (j\geqslant 1)\\
&\left. \begin{aligned}
(3.3)\ \ \ \ \hspace{1pt}\mathrm{[}\iota_j,\eta_j\mathrm{]} & = 0\ \ \mathrm{if\ and\ only\ if}\ j\equiv 3\ (4)\ \mathrm{or}\ j=2,6\\
(3.4)\ \ \ \ [\iota_j,\eta^2_j] & = 0\ \ \mathrm{if\ and\ only\ if}\ j\equiv 2, 3\ (4)\ \mathrm{or}\ j=5. 
\end{aligned}\hspace{1.83in}\right\} (j\geqslant 2)\\
&\left. \begin{aligned}
(3.5)\ \  \# \mathrm{[}\iota_j,\nu_j\mathrm{]} & = \begin{cases}
1 & \text{if $j\equiv 7$ $(8)$ or $j=2^i-3$ for $i\geqslant 3$};\\
2 & \text{if $j\equiv 1,3,5$ $(8)\geqslant 9$ and $j\neq 2^i-3$};\\
12 & \text{if $j\equiv 2$ $(4)\geqslant 6$ or $j=4,12$;}\\
24 & \text{if $j\equiv 0$ $(4)\geqslant 8$ unless $j=12$}.
\end{cases}\\
(3.6)\ \ \ \  [\iota_j,\nu^2_j] & =\ 0\ \ \mathrm{if\ and\ only\ if}\ j\equiv 4, 5, 7\ (8)\ \mathrm{or}\ j=2^i-5\ \mathrm{for}\ i\geqslant 4.
\end{aligned}\hspace{.61in}\right\} (j\geqslant 4)\\
&\left.\hspace{2pt}
(3.7) \ \ \# [\iota_j,\sigma_j]  = \begin{cases}
1 & \text{if $j=11$ or $j\equiv 15$ $(16)$};\\
2 & \text{if $j$ is odd and $j\geqslant 9$ unless $j=11$ or $j\equiv 15$ $(16)$;}\\
120 & \text{if $j=8$};\\
240 & \text{if $j$ is even and $j\geqslant 10$}.
\end{cases}\qquad\right\} (j\geqslant 8)
\end{align*}
Here $\#$ denotes the order of a group element.

Now assume that $m\leqslant 3n-5$ and consider the exact sequence
\begin{flushleft}
\begin{tikzpicture}
\matrix(m)[matrix of math nodes,
row sep=2.6em, column sep=3.0em,
text height=1.5ex, text depth=0.25ex]
{(3.8)\hspace{-.4in} & \pi_{m-n+1}(S^{n-1}) & \pi_{m-1}(S^{n-1}) & \pi_m(S^n) & \pi_{m-n}(S^{n-1}) & \cdots\\};
\path[->,font=\scriptsize,>=angle 90]
(m-1-2) edge node[auto] {$[\iota_{n-1}, - ]$} (m-1-3)
(m-1-3) edge node[auto] {$E$} (m-1-4)
(m-1-4) edge node[auto] {$E^{-n}\circ H$} (m-1-5)
(m-1-5) edge node[auto] {$[\iota_{n-1}, - ]$} (m-1-6);
\end{tikzpicture}
\end{flushleft}
derived from the EHP-sequence (\textit{cf.} [W] XII, 2.3 and 2.5).  The group $\pi_{m-n+1}(S^{n-1})$ is stable (\textit{i.e.} isomorphic to $\pi_{q-1}^s$, \textit{cf.} 1.12) and generated by one of the homotopy classes listed in (3.1) if $1\leqslant q\leqslant 8$, $q\neq 5,6$.  Thus the Whitehead product of $\iota_{n-1}$ with this generator vanishes precisely if the subsequent suspension homomorphism $E$ in (3.8) is injective.

Similarly $E$ is onto if and only if $E^{-n}\circ H\equiv 0$, \textit{i.e.} the generator of $\pi_{m-n}(S^{n-1})$ has the same order as its Whitehead product with $\iota_{n-1}$.  The claims in the first two columns of table 1.18 now follow from (3.1) - (3.7).

The remaining injectivity and surjectivity claims in the finitely many dimension combinations where $m> 3n-5$ and $q=m-2n+3\leqslant 8$ can be established by using Toda's explicit calculations in chapter V of [T] or the 2-primary sequence in [T], ch. IV, (4.4).  Indeed, since we assume $n$ to be even, $\mathrm{ker}E$ has no odd torsion (by Serre's theorem, \textit{cf.} [T], 13.1).\hspace{.37in}$\Box$

These techniques yield also a proof of Proposition 1.19.

\begin{flushleft}
\textbf{Example 3.9.}  If $q\leqslant 8$ and $n=4$, then $\mathrm{ker}E=0$.
\end{flushleft}

Clearly $E: \pi_{m-1}(S^{n-1})\rightarrow \pi_m(S^n)$ is also injective whenever $n=2\leqslant m$.

\section{Kervaire invariants}
In this section we discuss Theorems 1.13 and 1.15 which deal with the first nonstable dimension setting $m=2n-2$, $n$ even, $n\geqslant 2$.  The role which the Kervaire invariant plays for selfcoincidence questions was first noted in [GR1] and [GR2].
\setcounter{equation}{1}

First we prove Theorem 1.13, using Proposition 1.19.  We may assume that $n\neq 2,4,8$ (since $E$ is an isomorphism otherwise).  Then the EHP-sequence (\textit{cf.} 3.8) yields the short exact sequence
\begin{flushleft}
\begin{tikzpicture}
\matrix(m)[matrix of math nodes,
row sep=2.6em, column sep=2.8em,
text height=1.5ex, text depth=0.25ex]
{(4.1)\hspace{.28in} & 0 & \mathbb{Z}_2 & \pi_{2n-3}(S^{n-1}) & \pi_{2n-2}(S^n) & 0\\};
\path[->,font=\scriptsize,>=angle 90]
(m-1-2) edge (m-1-3)
(m-1-3) edge (m-1-4)
(m-1-4) edge node[auto] {$E$} (m-1-5)
(m-1-5) edge (m-1-6);
\path[dashed,->,font=\scriptsize,>=angle 90]
(m-1-5) edge [bend left=45] node[below] {$\partial$} (m-1-4);
\end{tikzpicture}
\end{flushleft}
where $\mathrm{ker}E\cong\mathbb{Z}_2$ is generated by
\begin{equation}
w_{n-1} = [\iota_{n-1},\iota_{n-1}].
\end{equation}
Moreover $\partial\circ E\equiv 2\cdot\mathrm{id}$ on $\pi_{m-1}(S^{n-1}) = E(\pi_{m-2}(S^{n-2})$ and hence $\partial\equiv 2\cdot E^{-1}$ (by 2.2).  Therefore the Wecken condition $WeC(2n-2,n)$ fails if and only if there is some $\alpha\in\pi_{2n-3}(S^{n-1})$ such that $2\alpha = w_{n-1}$, \textit{i.e.} $w_{n-1}$ can be halved.  But this is equivalent to the existence of a Kervaire invariant one element $[\widetilde{f}]\in \pi_{2n-2}(S^n)\cong \pi^s_{n-2}$ having order 2 (\textit{cf.} [BP]) which is known to exist e.g. for $n=16, 32$ and 64 (\textit{cf.} [BJM1], p. 11, and [Co], p. 1195).  In fact, for all order 2 Kervaire invariant one elements $[\widetilde{f}]$ we know that $\partial([\widetilde{f}]) = 2\cdot E^{-1}([\widetilde{f}]) = w_{n-1}$ (\textit{cf.} [BJM1], p. 11); hence the homomorphisms $\partial$ and $KI$ agree on all $[\widetilde{f}]\in\pi_{2n-2}(S^n)$ such that $2[\widetilde{f}]$ or equivalently, $N^{\#}(\widetilde{f},\widetilde{f})$ vanish.\hspace{3.36in}$\Box$

\begin{flushleft}\textit{Proof of Theorem 1.15.}  Toda [T] denotes the standard $\mathrm{im}J$ generator for the 2-primary component $\mathbb{Z}_8$ of the stable 11-stem $\mathbb{Z}_{504}$ by $\zeta$.  Now, Theorem 3.1 of [LR] affirms that the vanishing of the Whitehead product $[\zeta_{115},\iota_{115}]$ in $\pi_{240}(S^{115})$ is the complete obstruction to the existence of $\theta_6$.  If $[\zeta_{115},\iota_{115}]=0$, any map $h: S^{242}\rightarrow S^{116}$ with Hopf invariant $\zeta_{231}$ has the property that $KI([E^{12}h])=1$ by Theorem A of [BJM3].\end{flushleft}\vspace{-.12in}

We assume now that condition (i) holds, \textit{i.e.} $[\zeta_{115},\iota_{115}]\neq 0$, and shall derive (ii).  We shall deduce that $[\zeta_{115},\iota_{115}]\in 8\pi_{240}(S^{115})$ from Nomura's tables in [N, p. 164] (also see (3.2) of [LR]).  With respect to the Stiefel fibering $S^{115}\stackrel{i}{\rightarrow} V_{128,13}\rightarrow V_{128,12}$, Nomura shows that $i_*(\zeta_{115}) = 8i_8s_5\sigma_{119}\in \pi_{126}(V_{128,13})$.  Consider the $J$-morphism $J: SO(115)\rightarrow SF(115) = \Omega^{115}S^{115}$.  Since the boundary morphism $\partial: \pi_{126}(V_{128,13})\rightarrow \pi_{125}(SO(115))$ is an isomorphism in the fibration $SO(115)\rightarrow SO(128)\rightarrow V_{128,13}$, the image of $i_*(\zeta_{115})$ under the morphisms
$$
\begin{tikzpicture}
\matrix(m)[matrix of math nodes,
row sep=2.6em, column sep=2.8em,
text height=1.5ex, text depth=0.25ex]
{\pi_{126}(V_{128,13}) & \pi_{125}(SO(115)) & \pi_{125}(SF(115))\\};
\path[->,font=\scriptsize,>=angle 90]
(m-1-1) edge node[above] {$\partial$} node[below] {$\cong$} (m-1-2)
(m-1-2) edge (m-1-3);
\end{tikzpicture}
$$
is represented by
$$
\begin{tikzpicture}
\matrix(m)[matrix of math nodes,
row sep=2.6em, column sep=2.8em,
text height=1.5ex, text depth=0.25ex]
{S^{125} & S^{114} & SO(115) & SF(115).\\};
\path[->,font=\scriptsize,>=angle 90]
(m-1-1) edge node[above] {$\zeta_{114}$} (m-1-2)
(m-1-2) edge node[above] {$T(S^{115})$} (m-1-3)
(m-1-3) edge node[above] {$J$} (m-1-4);
\end{tikzpicture}
$$
The adjoint of the above map represents $[\zeta_{115},\iota_{115}]$.  Thus nontriviality of $[\zeta_{115},\iota_{115}]$ yields $[\zeta_{115},\iota_{115}]=8\alpha$ for some class $\alpha\in \pi_{240}(S^{115})$.  Let $g: S^{240}\rightarrow S^{115}$ represent $4\alpha$.  Then $[Eg]$ has order 2 so $(Eg,Eg)$ is loose.  Note that $h_0([g]) = h_0(4\alpha) = 4 h_0(\alpha) = 0$ in $\pi_{240}(S^{229})$ by Theorem 8.6 of [W], ch. XI.  Now in the Stiefel fibering $V_{117,2}\rightarrow S^{116}$, 
$$
\partial([Eg]) = (2\iota_{115})\circ [g]  = 2[g]+[\iota_{115},\iota_{115}] h_0([g]) = 2[g] = [\zeta_{115},\iota_{115}].
$$
Consequently, $(Eg,Eg)$ is not loose by small deformation. 

We now assume that condition (ii) holds and shall deduce that $[\zeta_{115},\iota_{115}]\neq 0$ so condition (i) must hold.  By hypothesis, for some map $g: S^{240}\rightarrow S^{115}$, $(Eg,Eg)$ is loose, but not loose by small deformation.  That is, $\partial ([Eg])\neq 0$ in $\pi_{240}(S^{115})$ in the Stiefel fibering $S^{115}\rightarrow V_{117,2}\rightarrow S^{116}$, but $E(\partial([Eg]))=0$ in $\pi_{241}(S^{116})$.  Now $(Eg,Eg)$ is loose means $(-\iota_{116})\circ [Eg] = -[Eg] = [Eg]$ so $2[Eg]=0$.  So $P: \pi_{242}(S^{231})\rightarrow \pi_{240}(S^{115})$ must be nontrivial on the 2-primary component of $\pi_{242}(S^{231})$ by exactness of the EHP-sequence.  Consequently, $P(\zeta_{231}) = [\zeta_{115},\iota_{115}]\neq 0$.\hspace{4.64in}$\Box$

\section{Criteria involving Hopf invariants}
In this section we prove Theorems 1.16 and 1.17 as well as the claims in the third column of Table 1.18, applying Corollary 1.10 and Proposition 1.19 which were established in sections 2 and 3.  Again let $n$ be even throughout.

Assume that $h_0: \pi_{m-1}(S^{n-1})\rightarrow \pi_{m-1}(S^{2n-3})$ vanishes and $\mathrm{ker}E$ is isomorphic to $\mathbb{Z}_2$.  If the generator $v$ of $\mathrm{ker}E$ can be halved, \textit{i.e.} $v=2\alpha$ for some $\alpha\in\pi_{m-1}(S^{n-1})$, then the Wecken condition $WeC(m,n)$ (\textit{cf.} 1.8) fails; indeed
$$
\partial(E(\alpha)) = 2\alpha + [\iota_{n-1},\iota_{n-1}]\circ h_0(\alpha) = v
$$
(\textit{cf.} 2.2) is a nontrivial element of $\partial(\pi_m(S^n))\cap \mathrm{ker}E$.  When $E$ is also onto, then $\partial(\pi_m(S^n)) = 2\cdot \pi_{m-1}(S^{n-1})$, and $WeC(m,n)$ fails precisely if $v$ can be halved.

In the dimension range $q\leqslant 8$ we know when $\mathrm{ker}E\cong\mathbb{Z}_2$ and when $E$ is onto (\textit{cf.} 1.18 and 1.19).  So let us take a closer look at $h_0$.

\begin{lem}
Assume that $q\leqslant 8$ and $\mathrm{ker}E\neq 0$.  Then $h_0(\pi_{m-1}(S^{n-1}))=0$ except when $q=4$ and $n\equiv 4$ $(8)$, $n\geqslant 12$, or possibly also when $q=8$ and $n=6$ or $10$.
\end{lem}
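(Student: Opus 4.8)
The plan is to run the same EHP-sequence argument already used for Table~1.18 in Section~3, but applied to $S^{n-2}$ in place of $S^{n-1}$, so as to turn the vanishing of $h_0$ into a statement about orders of Whitehead products which can be read off from $(3.2)$--$(3.7)$.

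I would first dispose of the easy values of $q$. For $q=5,6$ the hypothesis is vacuous, since $E$ is injective by $1.18$ (equivalently, the target $\pi_{m-1}(S^{2n-3})\cong\pi^s_{q-1}$ is $\pi^s_4=0$ or $\pi^s_5=0$). For $q=1$ the target is $\pi_{2n-3}(S^{2n-3})\cong\mathbb{Z}$ while the source $\pi_{2n-3}(S^{n-1})$ is finite --- the sphere $S^{n-1}$ being odd-dimensional with $2n-3>n-1$ (note $\ker E\neq 0$ forces $n\geqslant 6$ here) --- so $h_0=0$. Henceforth $q\in\{2,3,4,7,8\}$.

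For the main reduction, recall that $h_0$ kills suspended classes ([W], ch.~XI); hence $h_0\equiv 0$ as soon as $E\colon\pi_{m-2}(S^{n-2})\to\pi_{m-1}(S^{n-1})$ is onto. Conversely, when $E$ is not onto its cokernel is detected by $h_0$, which in the metastable range $m\leqslant 3n-5$ (i.e.\ $n\geqslant q+2$) coincides up to sign with the James--Hopf invariant $H$ of the EHP sequence
\[
\pi_{m-2}(S^{n-2})\ \xrightarrow{\ E\ }\ \pi_{m-1}(S^{n-1})\ \xrightarrow{\ h_0=H\ }\ \pi_{m-1}(S^{2n-3})\ \xrightarrow{\ P\ }\ \pi_{m-3}(S^{n-2})
\]
of $S^{n-2}$ (compare $(3.8)$). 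So in that range $h_0(\pi_{m-1}(S^{n-1}))=0$ exactly when $E\colon\pi_{m-2}(S^{n-2})\to\pi_{m-1}(S^{n-1})$ is surjective, which by the discussion following $(3.8)$ (with $n$ replaced by $n-1$) happens precisely when the generator $g$ of the stable group $\pi_{m-n}(S^{n-2})\cong\pi^s_{q-1}$ satisfies $\#\,[\iota_{n-2},g]=\#\,g$; here $g=\eta_{n-2},\eta^2_{n-2},\nu_{n-2},\nu^2_{n-2},\sigma_{n-2}$ for $q=2,3,4,7,8$, of order $2,2,24,2,240$. The finitely many values of $n$ below the metastable threshold --- for $q=8$ exactly $n=6$ (where moreover $\sigma_{n-2}$ is undefined) and the borderline $n=10$ --- are excluded from this argument and must be inspected directly, which is the source of the qualifier ``possibly''.

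It remains to compare orders via $(3.2)$--$(3.7)$ under the constraint $\ker E\neq 0$, which by $1.18$--$1.19$ fixes $n$ modulo a power of $2$. For $q=2,3$ the constraint gives $n\equiv 2\ (4)$, hence $n-2\equiv 0\ (4)$, and then $(3.3)$--$(3.4)$ yield $[\iota_{n-2},\eta_{n-2}]\neq 0$, $[\iota_{n-2},\eta^2_{n-2}]\neq 0$, so $E$ is onto and $h_0\equiv 0$. For $q=7$ the constraint gives $n\equiv 2,4\ (8)$, hence $n-2\equiv 0,2\ (8)$, and $(3.6)$ gives $[\iota_{n-2},\nu^2_{n-2}]\neq 0$, again $h_0\equiv 0$. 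For $q=4$, $(3.5)$ shows $\#[\iota_{n-2},\nu_{n-2}]=24$ unless $n-2\equiv 2\ (4)$ or $n-2\in\{4,12\}$, where it is $12$; intersecting $n\equiv 0\ (4)$ with $\ker E\neq 0$ leaves exactly $n\equiv 4\ (8)$, $n\geqslant 12$, while $n\in\{6,14\}$ drops out because there $\ker E=0$ --- so this is the only exception for $q=4$. For $q=8$, $(3.7)$ gives $\#[\iota_{n-2},\sigma_{n-2}]=240$ whenever $n-2$ is even and $\geqslant 10$, hence $h_0\equiv 0$ for even $n\geqslant 12$, which --- together with the two cases $n=6,10$ set aside above --- yields the stated list. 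This bookkeeping, together with the precise range in which $h_0=H$ and the EHP sequence is exact, is the part I expect to require the most care: it is where the exceptions are pinned down, and where the residual cases $q=8$, $n\in\{6,10\}$ must be decided using the fine structure of $\pi_\ast(S^8)$.
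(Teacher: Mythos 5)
Your argument is essentially the paper's own proof: it identifies $h_0$ with (up to sign) the Hopf invariant in the EHP sequence of $S^{n-2}$, reduces the vanishing of $h_0(\pi_{m-1}(S^{n-1}))$ to the order comparison $\#[\iota_{n-2},\gamma]=\#\gamma$ for the stable generator $\gamma$ of $\pi_{m-n}(S^{n-2})$, and settles this case by case via $(3.2)$--$(3.7)$ together with the injectivity column of Table 1.18, setting aside $(q,n)=(8,6),(8,10)$ exactly as the paper does. The only quibble is that your stated exactness threshold $m\leqslant 3n-5$ (i.e.\ $n\geqslant q+2$) is one dimension more generous than the paper's $n\geqslant q+3$; but the only relevant case falling in the gap is $(q,n)=(8,10)$, which you exclude as borderline anyway, so the conclusion is unaffected.
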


\begin{proof}
If $(q,n)\neq (8,6), (8,10)$ our assumptions imply that $n\geqslant q+3$ or, equivalently, $m-1\leqslant 3(n-1)-4$ (\textit{cf.} the first column of Table 1.18); thus the piece
$$
\begin{tikzpicture}
\matrix(m)[matrix of math nodes,
row sep=2.6em, column sep=3.2em,
text height=1.5ex, text depth=0.25ex]
{\pi_{m-2}(S^{n-2}) & \pi_{m-1}(S^{n-1}) & \pi_{m-n}(S^{n-2}) & \pi_{m-3}(S^{n-2})\\};
\path[->,font=\scriptsize,>=angle 90]
(m-1-1) edge node[above] {$E'$} (m-1-2)
(m-1-2) edge node[above] {$E'^{1-n}\circ H'$} (m-1-3)
(m-1-3) edge node[above] {$[\iota_{n-2}, - ]$} (m-1-4);
\end{tikzpicture}
$$
of the EHP-sequence (for $m-1, n-1$ instead of $m, n$ in 3.8) is exact and $h_0 = \pm H'$ (\textit{cf.} [BS], Theorem 4.18) vanishes precisely if the generator $\gamma_{n-2}$ of $\pi_{m-n}(S^{n-2})$ has the same order as its image $[\iota_{n-2},\gamma_{n-2}]$.  According to formulas (3.2) - (3.7) above this holds except when $q=4$, $n\equiv 4$ (8) and $n> 4$ (\textit{i.e.} except when $\# [\iota_{n-2},\nu_{n-2}]=12$ and $\mathrm{ker}E\neq 0$; anyway, note that $E$ is not surjective in this case).
\end{proof}

Now we are ready to establish the claims in the third column of Table 1.18.  In all dimension combinations listed there, the nontrivial element $v\in\mathrm{ker}E\cong \mathbb{Z}_2$ is the indicated Whitehead product of $\iota_{n-1}$ with a generator of $\pi_{m-n+1}(S^{n-1})$ (compare 3.8).  Moreover it follows from Lemma 5.1 that $\partial\circ E\equiv 2\cdot \mathrm{id}_{\pi_{m-1}(S^{n-1})}$ (\textit{cf.} 2.2).  In view of the discussion above and of Corollary 1.10 our claims for $q\neq 2$ are reduced to the question whether $v$ can be halved.  When $q=1$ this is just the "strong Kervaire invariant one problem" discussed e.g. in Theorem 1.13 and Section 4 above.  The following proofs contain detailed answers also for $q=2$ or 3.

\begin{flushleft}\textit{Proof of Theorem 1.16.}   We need to consider only the case $n\geqslant 4$, $n$ even (since $\pi_{m-1}(S^{n-1})$, $\partial$, $E\circ\partial$, $MCC(f,f)$ and $N^{\#}(f,f)$ all vanish for $n=2$).  We have the exact sequence\end{flushleft}
\begin{flushleft}
\begin{tikzpicture}
\matrix(m)[matrix of math nodes,
row sep=2.6em, column sep=2.8em,
text height=1.5ex, text depth=0.25ex]
{(5.2)\hspace{.57in} & \mathrm{ker}E\subset \pi_{2n-2}(S^{n-1}) & \pi_{2n-1}(S^n) & \mathbb{Z}\\};
\path[->,font=\scriptsize,>=angle 90]
(m-1-2) edge node[above] {$E$} (m-1-3)
(m-1-3) edge node[above] {$H\not\equiv 0$} (m-1-4);
\path[dashed,->,font=\scriptsize,>=angle 90]
(m-1-3) edge [bend left=35] node[below] {$\partial$} (m-1-2);
\end{tikzpicture}
\end{flushleft}\setcounter{equation}{2}
where the kernel of $E$ is generated by $v_{n-1}:=[\iota_{n-1},\eta_{n-1}]$ (\textit{cf.} 3.8).  Clearly $\mathrm{ker}H = E(\pi_{2n-2}(S^{n-1}))$ is just the torsion subgroup of $\pi_{2n-1}(S^n)$.  Moreover
\begin{equation}
\partial([\iota_n,\iota_n]) = v_{n-1}
\end{equation}
(\textit{cf.} [J], (7.4))\\
\indent If $n\neq 4, 8$, we can write 
$$
[\widetilde{f}] = E\alpha + \Big(\frac{1}{2}H([\widetilde{f}])\Big)\cdot [\iota_n,\iota_n];
$$
then
\begin{equation}
\partial([\widetilde{f}]) = 2\alpha + \Big(\frac{1}{2}H([\widetilde{f}])\Big)v_{n-1}
\end{equation}
(\textit{cf.} 2.2 and 5.1); hence $E\circ\partial([\widetilde{f}]) = 2E(\alpha)$.  This implies the first claim in Theorem 1.16 (\textit{cf.} 1.9).

\indent If $n\equiv 0$ (4), $E$ is injective (\textit{cf.} 1.18); hence the Wecken condition holds.

\indent Thus assume $n\equiv 2$ (4), $n\geqslant 6$.  Then $\mathrm{ker}E\cong \mathbb{Z}_2$ (\textit{cf.} 1.18) and $v_{n-1}$ is a nontrivial element of $\mathrm{ker}E\cap \partial(\pi_{2n-1}(S^n))$ (\textit{cf.} 5.3).  Therefore the Wecken condition fails (\textit{cf.} 1.8).  Moreover $v_{n-1}$ cannot be halved, \textit{i.e.} $v_{n-1}\not\in 2\pi_{2n-2}(S^{n-1})$ (\textit{cf.} [J], 5.2).  Thus $\partial([\widetilde{f}])=0$ if and only if $2\alpha = 0$ \underline{and} $\frac{1}{2}H([\widetilde{f}])$ is even (\textit{cf.} 5.4); in turn, $2\alpha=0$ precisely if $2E(\alpha)=E\circ\partial ([\widetilde{f}])$ or, equivalently, $N^{\#}(f,f)$ vanishes.  In view of Theorem 1.9 this completes the proof.\hspace{5.955in}$\Box$

For early examples illustrating Theorem 1.16 see [GR1], Section 4.  Related topics appear in [GR3].

Let us note one interesting aspect of the previous discussion.  The question whether the generator $v$ of $\mathrm{ker}E$ can be halved (\textit{i.e.} lies in $2\pi_{m-1}(S^{n-1}))$ plays crucial but different roles in the first two nonstable dimension settings.  If $q=1$ the Wecken condition fails and an additional looseness obstruction (the Kervaire invariant) is needed precisely when $v$ is both nontrivial and halvable.  In contrast, if $q=2$ the Wecken condition fails when only $v\neq 0$; then the Hopf invariant, taken mod 4, appears as the extra looseness obstruction (in addition to $N^{\#}$) because $v$ can \underline{not} be halved.

\begin{flushleft}
\textit{Proof of Theorem 1.17.}  Now assume $q=3$.  Then such halvability considerations play no role.  Indeed, $v=[\iota_{n-1},\eta^2_{n-1}]$ is even divisible by 4.  E.g. if $n\equiv 2$ (4), $n\geqslant 10$, then $v=4\alpha$ for a certain element $\alpha = \mathrm{J}(i_{n-1}(\mathbb{R})\overline{\tau}'_{n-2})$ of $\pi_{2n-1}(S^{n-1})$ (\textit{cf.} [GM], lines 10-13 of the introduction, together with formula (4.5)).  Theorem 1.17 follows now from our results 1.9, 1.10, 1.19, 5.1 and the injectivity and surjectivity criteria in Table 1.18 \hspace{1.3in}$\Box$\end{flushleft}

\begin{flushleft}
\textbf{Example 5.5: n = 2 or n = 6.}  Here the pair $(f,f)$ is loose by small deformation for all maps $f: S^{2n}\rightarrow S^n/\mathbb{Z}_2$.  Indeed, $\pi_{2n}(S^n) = \mathbb{Z}_2$ and hence $2[\widetilde{f}]=0$.  If $n=2$ and $\beta$ denotes the homotopy class of the Hopf map, then $[\iota_n,\iota_n]=\beta + \beta$, and $h_0([\widetilde{f}])\in\pi_4(S^3) =$ $E(\pi_3(S^2))$; thus $[\iota_n,\iota_n]\circ h_0(\widetilde{f}) = 2(\beta\circ h_0(\widetilde{f}))$ vanishes also. (As an alternative we may use Theorem 1.9 and the fact that $\partial ([\widetilde{f}])$ lies in $\pi_3(S^1)=0$.)\end{flushleft}

If $n=6$, then $[\iota_n,\iota_n]\circ h_0(\widetilde{f})=0$ by the following more general argument.  Given $n\geqslant 3$, the diagram
$$
\begin{tikzpicture}
\matrix(m)[matrix of math nodes,
row sep=.6em, column sep=2.4em,
text height=1.5ex, text depth=0.25ex]
{\mathbb{Z}_2\cdot\eta_n = \pi_{n+1}(S^n)\hspace{.2in} & \\
 & \pi_{2n}(S^n)\\
\phantom{\mathbb{Z}_2\cdot\eta =  }\pi_{2n}(S^{2n-1})\hspace{.2in} & \\};
\path[->,font=\scriptsize,>=angle 90]
(m-1-1) edge node[above] {$[\iota_n, - ]$}(m-2-2)
(m-1-1) edge node[left] {$\cong$} node[right] {$E^{n-1}$} (m-3-1)
(m-3-1) edge node[below] {$[\iota_n,\iota_n]_*$}(m-2-2);
\end{tikzpicture}
$$
commutes (\textit{cf.} [W], ch. XII, (2.4) and (2.5)).  Thus the induced homomorphism $[\iota_n,\iota_n]_*$ is injective precisely when $[\iota_n,\eta_n]\neq 0$, \textit{i.e.} $n\neq 6$ and $n\not\equiv 3$ $(4)$ (\textit{cf.} 3.3).  In particular, given any element $[\widetilde{f}]\in\pi_{2n}(S^n)$, we obtain also
\setcounter{prop}{5}
\begin{cor}
When $n\equiv 0$ $(4)$ and $2[\widetilde{f}]=0$ (e.g. when $n\leqslant 16$, \textit{cf.} [T]) then $(\widetilde{f},\widetilde{f})$ is loose if and only if $h_0([\widetilde{f}])\in\pi_{2n}(S^{2n-1})$ vanishes.
\end{cor}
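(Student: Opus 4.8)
The plan is to combine Corollary~1.10 with the commuting triangle displayed just above. First I would apply Theorem~1.9 to the lift $\widetilde f$, i.e. to the case where the ambient space is $S^n$ and $G$ is trivial: since $0\ncong\mathbb{Z}_2$, the implication $(\mathrm{iii})\Rightarrow(\mathrm{iv})$ there is an equivalence, so $(\widetilde f,\widetilde f)$ is loose exactly when $N^{\#}(\widetilde f,\widetilde f)=0$. As $n\equiv 0\ (4)$ is even, Corollary~1.10 applies and identifies this with condition $(\mathrm{vi})$, namely $2[\widetilde f]=[\iota_n,\iota_n]\circ h_0(\widetilde f)$. Feeding in the hypothesis $2[\widetilde f]=0$, this collapses to the single equation $[\iota_n,\iota_n]\circ h_0(\widetilde f)=0$ in $\pi_{2n}(S^n)$, that is, $[\iota_n,\iota_n]_*\big(h_0(\widetilde f)\big)=0$, where $h_0(\widetilde f)\in\pi_{2n}(S^{2n-1})$.

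The second step is to observe that $[\iota_n,\iota_n]_*\colon\pi_{2n}(S^{2n-1})\to\pi_{2n}(S^n)$ is injective in our situation. Since $n\equiv 0\ (4)$ we have $n\neq 6$ and $n\not\equiv 3\ (4)$, so by (3.3) the Whitehead product $[\iota_n,\eta_n]$ is nonzero. The commuting triangle above (valid for $n\geq 3$) says $[\iota_n,\eta_n]=[\iota_n,\iota_n]_*\big(E^{n-1}(\eta_n)\big)$, and $E^{n-1}(\eta_n)$ generates $\pi_{2n}(S^{2n-1})\cong\mathbb{Z}_2$; hence $[\iota_n,\iota_n]_*$ is nontrivial, therefore injective on $\mathbb{Z}_2$. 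Consequently $[\iota_n,\iota_n]_*\big(h_0(\widetilde f)\big)=0$ if and only if $h_0(\widetilde f)=0$, which combined with the first step yields precisely the asserted equivalence. The parenthetical remark that $2[\widetilde f]=0$ holds automatically for $n\le 16$ is just a reading of Toda's tables in [T] for the relevant groups $\pi_{2n}(S^n)$, $n\in\{4,8,12,16\}$.

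I do not expect a genuine obstacle here: all the substance is already encapsulated in Corollary~1.10 and the displayed diagram. The only point requiring a little care is the bookkeeping in the first step — checking that for the lift $\widetilde f$ (ambient space $S^n$, $G=0$) looseness, vanishing of $N^{\#}$, and condition $(\mathrm{vi})$ really are all equivalent, so that the Kervaire-type correction term which separates $MCC$ from $N^{\#}$ in Theorem~1.13 is irrelevant at the level of $\widetilde f$. Once that is noted, the remainder is a short diagram chase.
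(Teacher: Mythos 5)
Your argument is correct and follows the paper's own route: the paper derives Corollary 5.6 from exactly the same ingredients, namely Theorem 1.9 with trivial $G$ and Corollary 1.10 to reduce looseness of $(\widetilde f,\widetilde f)$ to $[\iota_n,\iota_n]\circ h_0(\widetilde f)=0$, plus the displayed commuting triangle and (3.3) to see that $[\iota_n,\iota_n]_*$ is injective on $\pi_{2n}(S^{2n-1})\cong\mathbb{Z}_2$ when $n\equiv 0\ (4)$. No gaps; your bookkeeping about $G=0$ making looseness equivalent to $N^{\#}(\widetilde f,\widetilde f)=0$ is precisely the point the paper relies on.
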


\begin{flushleft}\textbf{Remark 5.7.}  It is important to notice that the Hopf homomorphisms $h_0$ appearing in 1.10, 1.17 and 5.6 on the one hand, and in 2.2 and 5.1 on the other hand, are defined in different dimension settings.  Since the parity of $j$ plays a big role in formulas (3.2) - (3.7) $h_0(\pi_{m-1}(S^{n-1}))$ may often vanish (e.g. as in 5.1) while $h_0(\pi_m(S^n))$ need not be trivial.  E.g. the Hopf invariant $h_0$ occurring in Corollary 5.6 above is never identically zero: $h_0(\pi_{2n}(S^n))\neq 0$ whenever $n\equiv 0$ (4) (since $E$ is not onto in this case).
\end{flushleft}

\end{document}